\numberwithin{equation}{section}
\def\tot{\tau^\omega}
\def\ci#1{\ensuremath{{\mathcal {#1}}}}
\def\z#1{\ensuremath{{{#1}}^{\ZZ}}}
\def\pv#1{\ensuremath{{\mathsf{#1}}}}
\def\Om#1#2{\ensuremath{\overline\Omega_{#1}{\pv{#2}}}}
\def\Im#1{\ensuremath{\mathop{\rm Im}{#1}}}
\def\KerS#1{\ensuremath{\mathop{\rm Ker}{#1}}}
\def\End#1{\ensuremath{\mathop{\rm End}{#1}}}
\def\ZZ{\ensuremath{\mathbb{Z}}}
\newtheorem{teor}{Theorem}[section]
\newtheorem{prop}[teor]{Proposition}
\newtheorem{lema}[teor]{Lemma}
\newtheorem{cor}[teor]{Corollary}
\newtheorem{remark}[teor]{Remark}
\newtheorem{prob}[teor]{Problem}
\theoremstyle{definition}
\begin{document}

\title
{
  Presentations of Sch\"utzenberger groups of minimal subshifts
}

\thanks{
  Research funded by the European Regional Development Fund,
  through the programme COMPETE, by the Portuguese Government through
  Centro de Matem\'atica da Universidade do Porto, Centre for
  Mathematics of the University of Coimbra, and FCT -- Funda\c{c}\~ao
  para a Ci\^encia e a Tecnologia, under the projects
  PEst-C/MAT/UI0144/2011 and PEst-C/MAT/UI0324/2011, and by the FCT
  project PTDC/MAT/65481/2006, within the framework of the programmes
  COMPETE and FEDER
}

\author{Jorge Almeida}
\address{CMUP, Departamento de Matem\'atica,
     Faculdade de Ci\^encias, Universidade do Porto, 
 Rua do Campo Alegre 687, 4169-007 Porto, Portugal.}
\email{jalmeida@fc.up.pt}

\author{Alfredo Costa}
\address{CMUC, Department of Mathematics, University of Coimbra,
  3001-454 Coimbra, Portugal.}
\email{amgc@mat.uc.pt}

\begin{abstract}
  In previous work, the first author established a natural bijection
  between minimal subshifts and maximal regular $\ci J$-classes of
  free profinite semigroups. In this paper, the Sch\"utzenberger
  groups of such $\ci J$-classes are investigated, in particular in
  respect to a conjecture proposed by the first author concerning
  their profinite presentation. The conjecture is established for all
  non-periodic minimal subshifts associated with substitutions. It
  entails that it is decidable whether a finite group is a quotient of
  such a profinite group. As a further application, the Sch\"utzenberger
  group of the $\ci J$-class corresponding to the Prouhet-Thue-Morse
  subshift is shown to admit a somewhat simpler presentation, from
  which it follows that it has rank three, and that it is non-free
  relatively to any pseudovariety of groups.
\end{abstract}

\keywords{Profinite group presentation, relatively free profinite
  semigroup, subshift, Prouhet-Thue-Morse substitution, return word}

\makeatletter
\@namedef{subjclassname@2010}{%
  \textup{2010} Mathematics Subject Classification}
\makeatother
\subjclass[2010]{Primary 20E18, 20M05; Secondary 37B10, 20M07}

\maketitle

\section{Introduction}

In recent years, several results on closed subgroups of free profinite
semigroups have appeared in the literature~\cite{Almeida:2005c,
  Almeida:2003c, Almeida&Volkov:2006, Rhodes&Steinberg:2008,
  Steinberg:2009,ACosta&Steinberg:2011}. The first author explored a
link between symbolic dynamics and free profinite semigroups that
allowed him to show, for several classes of maximal subgroups of free
profinite semigroups, all associated with minimal
subshifts~\cite{Almeida:2005c, Almeida:2003c}, that they are free
profinite groups. Rhodes and Steinberg~\cite{Rhodes&Steinberg:2008}
proved that the closed subgroups of free profinite semigroups are
precisely the projective profinite groups. Without using ideas from
symbolic dynamics, Steinberg proved that the Sch\"utzenberger group of
the minimal ideal of the free profinite semigroup over a finite
alphabet with at least two letters is a free profinite group with
infinite countable rank~\cite{Steinberg:2009}. The same result holds
for the Sch\"utzenberger group of the regular $\ci J$-class associated
to a non-periodic irreducible sofic
subshift~\cite{ACosta&Steinberg:2011}; the proof is based on the
techniques of~\cite{Steinberg:2009} and on the conjugacy invariance of
the group for arbitrary subshifts~\cite{ACosta:2006}.

In this paper, we investigate the minimal subshift associated with the
iteration of a substitution $\varphi$ over a finite alphabet $A$ and
the Sch\"utzenberger group $G(\varphi)$ of the corresponding $\ci
J$-class, $J(\varphi)$, of the free profinite semigroup on~$A$. A
minimal subshift can be naturally associated with the substitution
$\varphi$ if and only if $\varphi$ is \emph{weakly primitive}
\cite[Theorem~3.7]{Almeida:2005c}. Since weakly primitive
substitutions are \emph{primitive} on the subalphabet consisting of
the letters that do not eventually disappear under iteration of the
substitution, we will stick in this paper to the more familiar setting
of primitive substitutions \cite{Fogg:2002}.

A primitive substitution always admits a so-called \emph{connection},
which is a special two-letter block $ba$ of the subshift. Provided
$\varphi$~is an encoding of bounded delay, from the set $X$ of return
words for $ba$, which constitute a finite set, it is shown
in~\cite{Almeida:2005c} that one can then obtain a generating set for
a certain maximal subgroup~$H$ of~$J(\varphi)$ by cancelling the
prefix $b$, adding the same letter as a suffix, and applying the
idempotent (profinite) iterate~$\varphi^\omega$. In a lecture given at
the \emph{Fields Workshop on Profinite Groups and Applications}
(Carleton University, August 2005), the first author proposed, as a
problem, a natural profinite presentation for $G(\varphi)$, namely
\begin{equation}
  \label{eq:presentation}
  \langle X\mid\Phi^\omega(x)=x\ (x\in X)\rangle,
\end{equation}
where $\Phi$ is a continuous endomorphism of the free profinite group
on a suitable finite alphabet~$X$ that encodes the action of a finite
power of $\varphi$ which acts on the semigroup freely generated
by~$X$.

By a result of Lubotzky and Kov\'acs~\cite{Lubotzky:2001}, every
finitely generated projective profinite group has a finite
presentation as a profinite group, and indeed a presentation of the
form~\eqref{eq:presentation} for some continuous endomorphism $\Phi$
of the profinite group freely generated by~$X$. Hence, by the
previously mentioned result of Rhodes and Steinberg, every finitely
generated closed subgroup of a free profinite semigroup has such a
presentation. But, to be able to use a presentation of the
form~\eqref{eq:presentation}, for instance to determine whether a
given finite group is a (continuous) homomorphic image of the
profinite group so presented, one needs to be able to verify the
relations in a finite group, which imposes some computability
requirements on~$\Phi$. The problem proposed by the first author in
2005 already addressed this concern, proposing a suitable choice
for~$\Phi$.

In this paper, we establish the conjecture in full generality, that is
without any further restrictions on the (weakly) primitive
substitution $\varphi$ other than being non-periodic
(Theorem~\ref{t:conjecture}, which is our main theorem), thereby
showing that it entails the decidability of whether a finite group is
a continuous homomorphic image of~$G(\varphi)$
(Corollary~\ref{c:special-presentation->decidable}).\footnote{It is
  worth noting that it is decidable whether a given primitive
  substitution generates a periodic subshift
  \cite{Pansiot:1986,Harju&Linna:1986}.} The proof of the conjecture
depends on a key result from symbolic dynamics due to Moss\'e
\cite{Mosse:1992,Mosse:1996} (see \cite[Subsection~7.2.1]{Fogg:2002}
for its significance and history). Its need had been previously avoided
in~\cite{Almeida:2005c} using the bounded delay encoding condition,
which is fulfilled in the case of substitutions that induce
automorphisms of the free group.

The case of a \emph{proper} substitution, such that the images of all
letters start with the same letter and end with the same letter, has
played a special role both in symbolic dynamics
\cite{Durand&Host&Skau:1999} and in the connections with free
profinite semigroups \cite{Almeida&Volkov:2006,Almeida:2005c}. The
former reference shows that every subshift generated by a primitive
substitution is conjugate to a subshift generated by a proper primitive
substitution, which can be effectively computed. Since conjugate
minimal subshifts have isomorphic Sch\"utzenberger groups
\cite{ACosta:2006}, it is worth considering the special case of
subshifts generated by proper primitive substitutions, whose
Sch\"utzenberger group we show to admit the
presentation~\eqref{eq:presentation} with $X$ the original alphabet
and $\Phi$ the original substitution, provided the subshift is
non-periodic (Theorem~\ref{t:presentation-AV-case}). This gives an
alternative approach for the main theorem and its decidability
consequences.

The Prouhet-Thue-Morse infinite word and the corresponding subshift
are among the most studied in the literature~\cite{Fogg:2002}. They
are generated by the substitution $\tau(a)=ab$, $\tau(b)=ba$. From the
main theorem, we deduce that the profinite group $G(\tau)$ admits a
related profinite presentation with three generators and three
relations (Theorem~\ref{t:presentation-PTM}).
We deduce that $G(\tau)$ cannot be
relatively free with respect to any pseudovariety of groups
(Theorem~\ref{t:not-relatively-free-profinite}). This answers in a
very strong sense the question raised by the first author as to
whether this profinite group is free~\cite{Almeida:2005c}. In the same
paper there is already an argument to reduce the proof of this fact to
showing that the Sch\"utzenberger group $G(\tau)$ has rank three. From
the same simpler presentation, we do prove that this group has rank
three (Theorem~\ref{t:rank}).

We also consider the only other type of example in the literature of a
non-free Sch\"utzenberger group $G(\varphi)$ of a subshift defined by
a substitution, illustrated by the substitution $\varphi(a)=ab$,
$\varphi(b)=a^3b$ \cite[Example~7.2]{Almeida:2005c}, which is proper.
For this group, again we prove that it is not free relatively to any
pseudovariety of groups (Theorem~\ref{t:eg:1}).

The paper is organized as follows.
Section~\ref{sec:presentations-general} discusses presentations of
profinite semigroups. Section~\ref{sec:decidability} shows how certain
presentations can be used to obtain decidability results, which is our
main motivation for considering profinite presentations.
Section~\ref{sec:preliminaries} introduces the necessary background
and terminology on symbolic dynamics. The result of B. Moss\'e and its
consequence that a power of any non-periodic primitive substitution
$\varphi$ induces an automorphism of a suitable maximal subgroup
of~$J(\varphi)$ (Theorem~\ref{t:in-Im-phi}) are presented in
Section~\ref{sec:max-subgps-as-retracts}. Section~\ref{sec:core}
contains the main theorem and its version for proper primitive
substitutions, as well as the connections between the two.
Section~\ref{sec:applications} is dedicated to applications of the
main theorems and Section~\ref{sec:problems} concludes with some
open problems suggested by this work.

We indicate~\cite{Almeida:2003c,Rhodes&Steinberg:2009qt} as
supporting references on pseudovarieties and free profinite semigroups,
and~\cite{Lind&Marcus:1996,Fogg:2002} for symbolic dynamics.

\section{Presentations of pro-\texorpdfstring{\pv V}{V} semigroups}
\label{sec:presentations-general}

For a homomorphism $\psi:S\to U$ between semigroups, we denote by
$\KerS\psi$ the set of all pairs $(s_1,s_2)$ of elements of~$S$ such
that $\psi(s_1)=\psi(s_2)$.

It can be easily checked that an equivalence relation on a compact
space is open (respectively, closed, clopen) if so are its classes. In
particular, an equivalence relation on such a space is open if and
only if it is clopen. A congruence on a profinite semigroup $S$ is
said to be \emph{admissible} if it is the intersection of open
congruences. In other words, a congruence $\rho$ is admissible if and
only if it is closed and the quotient $S/\rho$~is profinite. Thus, the
admissible congruences are the kernels of continuous homomorphisms
into profinite semigroups. Since the intersection of admissible
congruences is admissible, for every relation $R\subseteq S\times S$
there is a smallest admissible congruence containing $R$, which we
call the \emph{admissible congruence generated} by~$R$. In the case of
a profinite group, it turns out that a congruence is admissible if and
only if it is closed
\cite[Proposition~2.2.1(a)]{Ribes&Zalesskii:2000}. See
\cite[Section~3.1]{Rhodes&Steinberg:2009qt} for further details,
although we prefer not to call \emph{profinite} an admissible
congruence on a profinite semigroup $S$ since every closed congruence
is a profinite subsemigroup of the product $S\times S$, but not every
closed congruence is admissible.

Throughout this section, we let \pv V be a pseudovariety of
semigroups. Consider a set $X$ and a binary relation $R$ on the
pro-\pv V semigroup \Om XV freely generated by~$X$
\cite{Almeida:2003c}. The quotient of \Om XV by the admissible
congruence generated by~$R$ is a pro-\pv V semigroup
\cite[Proposition~3.7]{Almeida:2003c} which is said to admit the
\emph{\pv V-presentation} $\langle X\mid R\rangle_\pv V$. In this
paper, we are interested in the cases where \pv V is either~\pv G, the
pseudovariety of all finite groups, or~\pv S, the pseudovariety of all
finite semigroups, the latter serving sometimes as a convenient way to
deal with the former.

We recall that the monoid \End S of continuous endomorphisms of a
finitely generated profinite semigroup $S$ is profinite for the
pointwise convergence topology, which coincides with the compact-open
topology~\cite[Proposition~1]{Hunter:1983}. For this reason, for the
remainder of the paper we only consider finite generating sets. Thus,
for $\varphi$ in the profinite monoid $\End{\Om XS}$, we may consider
the idempotent continuous endomorphism~$\varphi^\omega$.

Consider a pro-\pv V semigroup $T$ and an onto continuous homomorphism
$\pi$ from $\Om XV$ onto $T$, where $X$ is an arbitrary set. Let
$\varphi$ be a continuous endomorphism of $T$. By the universal
property of $\Om XV$, there is at least one continuous endomorphism
$\Phi$ of~\Om XV such that Diagram~\eqref{eq:a-com-diagram} commutes.
Call such an endomorphism a \emph{lifting of $\varphi$ via $\pi$}.
\begin{equation}\label{eq:a-com-diagram}
  \begin{split}
    \xymatrix{
      \Om XV\ar[r]^{\Phi}\ar[d]_\pi&\Om XV\ar[d]^\pi\\
      {T}\ar[r]^\varphi&{T}
    }
  \end{split}
\end{equation}

\begin{remark}
  \label{r:2}
  If $\varphi$ is an automorphism of $T$ then
  $\pi\circ\Phi^\omega=\pi$.
\end{remark}

\begin{proof}
  The facts that Diagram~\eqref{eq:a-com-diagram} commutes and
  $\pi$~is continuous entail the equality
  $\pi\circ{\Phi}^{\omega}=\varphi^\omega\circ\pi$. On the other hand,
  $\varphi^\omega$ is the identity on $T$ because $\varphi$ is an
  automorphism of $T$.
\end{proof}

Suppose now that $\varphi$ is an automorphism of the pro-\pv V
semigroup~$T$. Put
$$R=\{(\Phi^\omega(x),x):x\in X\}$$
and let $\rho$ be the admissible congruence on~\Om XV generated
by~$R$. From Remark~\ref{r:2} it follows that $R\subseteq\KerS\pi$,
which yields $\rho\subseteq \KerS\pi$. If $\rho=\KerS\pi$, then
\begin{equation}
  \label{eq:V-presentation}
  \langle X\mid \Phi^\omega(x)=x\ (x\in X)\rangle_\pv V
\end{equation}
is a presentation of~$T$. Note also that
$u\mathrel{\rho}\Phi^\omega(u)$ for every $u\in\Om XS$ since $\rho$ is
a closed congruence containing~$R$. It follows that
$\KerS\Phi^\omega\subseteq\rho$. Conversely, we have $R\subseteq
\KerS\Phi^\omega$ since $\Phi^\omega$ is idempotent, which entails
$\rho\subseteq\KerS\Phi^\omega$. We have thus shown that
$\rho=\KerS\Phi^\omega$.

\begin{lema}\label{l:equivalent-conditions-s}
  Let $T$ be a pro-\pv V semigroup and suppose that there is a
  commutative diagram \eqref{eq:a-com-diagram} of continuous
  homomorphisms, where $\pi$ is onto and $\varphi$ is an automorphism
  of~$T$. If \/ $\KerS\pi\subseteq\KerS\Phi^\omega$, then $T$ admits
  the presentation $\langle X\mid R\rangle_\pv V$.
\end{lema}

\begin{proof}
  By Remark~\ref{r:2}, we have $\KerS\pi\supseteq\KerS\Phi^\omega$.
  Hence, if $\KerS\pi\subseteq\KerS\Phi^\omega$, then $T\simeq\Om
  XV/\KerS\Phi^\omega=\Om XS/\rho=\langle X\mid R\rangle_\pv V$.
\end{proof}

The group analogue of Lemma~\ref{l:equivalent-conditions-s} involving
the group kernel, which is just a translation in the language of
profinite group theory of the lemma, also follows from a result of
Lubotzky~\cite[Proposition 1.1]{Lubotzky:2001}, who presents a proof
attributed to L. Kov\'acs. The same proof can also be found in the
second edition of~\cite{Ribes&Zalesskii:2000}, namely by combining
Lemma C.1.5 and Example C.1.6.

Let \pv W be a subpseudovariety of~\pv V. For a pro-\pv W semigroup,
there is a simple relationship between \pv V-presentations and \pv
W-presentations. If $\lambda:\Om XV\to S$ is a continuous homomorphism
onto a pro-\pv W semigroup, then $\lambda=\lambda'\circ q$, where
$q:\Om XV\to\Om XW$ is the canonical homomorphism and $\lambda':\Om
XW\to S$ is a continuous homomorphism. Let $u,v\in\Om XV$. It is
routine to check that if $\KerS\lambda$ is the admissible congruence
on~\Om XV generated by $R\subseteq\Om XV\times\Om XV$, then
$\KerS\lambda'$ is the admissible congruence on~\Om XW generated by
$(q\times q)(R)$. We thus have the following simple observation, which
we record here for later reference.

\begin{lema}
  \label{l:folklore}
  Let \pv W be a subpseudovariety of~\pv V. If the pro-\pv W semigroup
  $S$ admits the presentation $\langle X\mid R\rangle_\pv V$, then it
  also admits the presentation $\langle X\mid (q\times
  q)(R)\rangle_\pv W$.\qed
\end{lema}

We say that a pro-\pv V semigroup $S$ is \emph{\pv V-projective} if,
whenever $T$ and $U$ are pro-\pv V semigroups and $f:S\to T$ and
$g:U\to T$ are continuous homomorphisms with $g$ onto, there is
some continuous homomorphism $f':S\to U$ such that
the following diagram commutes:
$$
\xymatrix{
  &
  S
  \ar[d]^f
  \ar@{-->}[ld]_{f'}
  \\
  U
  \ar[r]_g
  &
  \,T.
}
$$

\begin{prop}
  \label{p:retracts}
  The following are equivalent for a pro-\pv V semigroup~$S$ and a
  finite set~$X$:
  \begin{enumerate}[(1)]
  \item\label{item:retracts-1} $S$ admits a presentation of the
    form~(\ref{eq:V-presentation}) for some continuous
    endomorphism $\Phi$ of\/~\Om XV;
  \item\label{item:retracts-2} $S$ is \pv V-projective and $X$-generated;
  \item\label{item:retracts-3} $S$ is a retract of\/~\Om XV.
  \end{enumerate}
\end{prop}

\begin{proof}
  (\ref{item:retracts-1})\,$\Rightarrow$\,(\ref{item:retracts-3}) Let
  $\Phi$ be a continuous endomorphism of~\Om XV and denote by $T$ the
  image of the retraction $\Phi^\omega$. It suffices to establish that
  $T$ admits the presentation~(\ref{eq:V-presentation}). For this
  purpose, we apply the general setting of this section to the
  following commutative diagram:
  $$
  \xymatrix{
    \Om XV
    \ar[d]_{\Phi^\omega}
    \ar[r]^{\Phi^\omega}
    &
    \Om XV
    \ar[d]^{\Phi^\omega}
    \\
    T
    \ar[r]^{\mathrm{id}}
    &
    T
  }
  $$
  From Lemma~\ref{l:equivalent-conditions-s} we deduce that indeed $T$
  admits the presentation~(\ref{eq:V-presentation}).
  
  (\ref{item:retracts-2})\,$\Rightarrow$\,(\ref{item:retracts-1})
  Let $\pi:\Om XV\to S$ be an onto continuous homomorphism. Since $S$
  is \pv V-projective, there is a continuous homomorphism
  $\gamma:S\to\Om XV$ such that $\pi\circ\gamma$ is the identity
  on~$S$. Consider the diagram
  $$
  \xymatrix{
    \Om XV
    \ar[r]^{\gamma\circ\pi}
    \ar[d]_\pi
    &
    \Om XV
    \ar[d]^\pi
    \\
    S
    \ar[r]_{\mathrm{id}}
    \ar[ru]_\gamma
    &
    \,S.
  }
  $$
  Since $\gamma\circ\pi$ is idempotent and
  $\KerS\pi\subseteq\KerS{(\gamma\circ\pi)}$,
  Lemma~\ref{l:equivalent-conditions-s} yields that $S$ admits the
  presentation $\langle X\mid (\gamma\circ\pi)^\omega(x)=x\ (x\in
  X)\rangle_\pv V$.

  (\ref{item:retracts-3})\,$\Rightarrow$\,(\ref{item:retracts-2})
  Suppose that the continuous homomorphism $r:\Om XV\to S$ is a
  retraction. Let $T$ and $U$ be pro-\pv V semigroups and let $f:S\to
  T$ and $g:U\to T$ be continuous homomorphisms with $g$ onto. Denote
  by $ i$ the inclusion mapping $S\to\Om XV$. Then we have the
  following diagram
  $$
  \xymatrix@C=5em{
    &
    \Om XV
    \ar[ldd]_ h
    \ar@<-0.5ex>[d]_ r
    \ar@/^.6cm/[dd]^{f\circ r}
    \\
    &
    S
    \ar@<-0.5ex>[u]_ i
    \ar[ld]^{ h\circ i}
    \ar[d]_f
    \\
    U
    \ar[r]_g
    &
    \,T,
  }
  $$
  where the existence of the continuous homomorphism $ h$ such
  that the outer triangle commutes follows from the universal
  property of the free pro-\pv V semigroup \Om XV. Since
  $ r\circ i$ is the identity on~$S$, we deduce that
  $g\circ h\circ i=f\circ r\circ i=f$, and
  so we may take $f'= h\circ i$.
\end{proof}

Combining Proposition~\ref{p:retracts} with the fact that closed
subgroups of a free profinite semigroup are \pv G-projective
\cite{Rhodes&Steinberg:2008}, we obtain the following result.

\begin{cor}
  \label{c:there-is-some-presentation-of-that-kind}
  Every finitely generated closed subgroup of a free profinite
  semigroup admits a presentation of the form
  \begin{equation}
    \label{eq:G-presentation}
    \langle X\mid \Phi^\omega(x)=x\ (x\in X)\rangle_\pv G
   \end{equation}
  for some continuous endomorphism $\Phi$ of~\Om
  XG.\qed
\end{cor}

The next result provides a method to drop relations in such
presentations corresponding to superfluous generators.

For a profinite semigroup $S$ and a subset $X$, the notation
$\overline{\langle X\rangle}$ stands for the closed subsemigroup of
$S$ generated by $X$.

\begin{prop}
  \label{p:drop-relation}
  Let $\Phi$ be a continuous endomorphism of~\Om XV, $x_0$ an element
  of the finite set $X$, and $Y=X\setminus\{x_0\}$. Suppose that
  $w\in\overline{\langle Y\rangle}$ is such that
  $\Phi^\omega(x_0)=\Phi^\omega(w)$ and let $r$ be the unique
  continuous endomorphism of~\Om XV that fixes each $y\in Y$ and maps
  $x_0$ to~$w$. Then the pro-\pv V semigroup presented by
  \begin{equation}
    \label{eq:drop-relation-1}
    \langle X\mid w=x_0,\, \Phi^\omega(x)=x\ (x\in X)\rangle_\pv V
  \end{equation}
  also admits
  the presentation
  \begin{equation}
    \label{eq:drop-relation-2}
    \langle Y\mid \Psi^\omega(y)=y\ (y\in Y)\rangle_\pv V,
  \end{equation}
  where $\Psi=r\circ\Phi$.
\end{prop}

\begin{proof}
  Note that we may add in the presentation~\eqref{eq:drop-relation-2}
  the generator $x_0$ and the relation $w=x_0$ without changing the
  pro-\pv V semigroup thus presented.
  
  Let $\theta$ be the admissible congruence on~\Om XV generated by the
  relation $w=x_0$. Since $r(u)\mathrel{\theta}u$ for every $u\in\Om
  XV$, we conclude that $\Psi(v)=r(\Phi(v))\mathrel{\theta}\Phi(v)$
  whenever $v\in\Om XV$.

  Let $\rho$ and $\sigma$ be the admissible congruences on~\Om XV
  generated by the relation $w=x_0$ together with, respectively, the
  relations $\Phi^\omega(x)=x$ ($x\in X$) and $\Psi^\omega(y)=y$
  ($y\in Y$). To complete the proof, it suffices to show that
  $\rho=\sigma$. For this purpose, in view of the preceding paragraph,
  it remains to show that $\Phi^\omega(x_0)\mathrel{\sigma}x_0$.
  Indeed,  we have
  $$\Phi^\omega(x_0)
  =\Phi^\omega(w)
  \mathrel{\theta}\Psi^\omega(w)
  \mathrel{\sigma}w
  \mathrel{\sigma}x_0,
  $$
  which gives the desired relation since $\theta\subseteq\sigma$.
\end{proof}

Note that, with the same proof, we could relax the hypothesis
$\Phi^\omega(x_0)=\Phi^\omega(w)$ to the relation
$\Phi^\omega(x_0)\mathrel{\theta}\Phi^\omega(w)$.

\section{Decidability}
\label{sec:decidability}

For a set $X$, denote by $\ci T(X)$ the semigroup of all full
transformations of~$X$. The following lemma will be useful. As has
been observed by the referee, it can be seen as an application of
Yoneda's Lemma but we prefer to give an elementary proof.

\begin{lema}
  \label{l:technical}
  Let $A$ be a finite set, \pv V be a pseudovariety of semigroups,
  $\varphi\in\End{\Om AV}$, and $S$ a semigroup from~\pv V. Consider
  the transformation $\varphi_S\in\ci T(S^A)$ defined by
  $\varphi_S(f)=\hat f\circ\varphi|_A$, where $\hat f$ is the unique
  extension of $f\in S^A$ to a continuous homomorphism $\Om AV\to S$.
  Then the correspondence
  \begin{align*}
    \End{\Om AV}&\to\ci T(S^A)\\
    \varphi&\mapsto\varphi_S
  \end{align*}
  is a continuous anti-homomorphism. In particular, we have
  $(\varphi^\omega)_S=(\varphi_S)^\omega$.
\end{lema}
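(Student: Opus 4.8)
The plan is to verify directly the defining property of a continuous anti-homomorphism and then read off the statement about $\omega$-powers from the profiniteness of $\End{\Om AV}$. Throughout I use that, since $S\in\pv V$ is finite, every map $A\to S$ extends uniquely to a continuous homomorphism $\Om AV\to S$; this universal property is the engine of the whole argument, and the identification of continuous extensions it yields is the key technical point.

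First I would establish the anti-homomorphism identity $\overline{\varphi\circ\psi}=\overline\psi\circ\overline\varphi$ for $\varphi,\psi\in\End{\Om AV}$, where on the right $\circ$ denotes composition in $\ci T(S^A)$. Fix $f\in S^A$ and put $g=\overline\varphi(f)=\hat f\circ\varphi|_A$. The crucial observation is that the continuous homomorphism $\hat f\circ\varphi\colon\Om AV\to S$ restricts on $A$ to $g$, so by uniqueness of continuous extensions one has $\hat g=\hat f\circ\varphi$. Consequently
\begin{equation*}
  (\overline\psi\circ\overline\varphi)(f)=\overline\psi(g)=\hat g\circ\psi|_A=\hat f\circ\varphi\circ\psi|_A=\hat f\circ(\varphi\circ\psi)|_A=\overline{\varphi\circ\psi}(f),
\end{equation*}
which is the desired identity. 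I expect this step, and in particular the identification $\hat g=\hat f\circ\varphi$, to be the conceptual heart of the proof; note that it also explains the order reversal, hence the \emph{anti}-homomorphism.

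Next I would prove continuity. Since $S$ is finite, $S^A$ and hence $\ci T(S^A)$ are finite, so $\ci T(S^A)$ carries the discrete topology and continuity amounts to local constancy of $\varphi\mapsto\overline\varphi$. As $S^A$ and $A$ are finite, it suffices to show that, for each fixed $f\in S^A$ and $a\in A$, the map $\varphi\mapsto\overline\varphi(f)(a)=\hat f(\varphi(a))$ is continuous into the discrete set $S$. This map factors as $\varphi\mapsto\varphi(a)\mapsto\hat f(\varphi(a))$: the first arrow is continuous because $\End{\Om AV}$ carries the pointwise topology, and the second is the continuous homomorphism $\hat f$. Hence $\varphi\mapsto\overline\varphi$ is continuous.

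Finally, the identity $\overline{\varphi^\omega}=\overline\varphi^{\,\omega}$ follows formally. From the anti-homomorphism property one gets $\overline{\varphi^n}=\overline\varphi^{\,n}$ for every $n$, since powers of a single element raise no order issue. As $\End{\Om AV}$ is profinite, $\varphi^\omega=\lim_n\varphi^{n!}$, and by continuity $\overline{\varphi^\omega}=\lim_n\overline{\varphi^{n!}}=\lim_n\overline\varphi^{\,n!}$. In the finite semigroup $\ci T(S^A)$ the sequence $\overline\varphi^{\,n!}$ stabilizes at the idempotent power $\overline\varphi^{\,\omega}$, giving $\overline{\varphi^\omega}=\overline\varphi^{\,\omega}$.
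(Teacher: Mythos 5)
Your proof is correct and takes essentially the same route as the paper's: the anti-homomorphism identity rests on exactly the same key identification (your $\hat g=\hat f\circ\varphi$ is the paper's $\widehat{\hat f\circ\varphi|_A}=\hat f\circ\varphi$), and continuity likewise comes from the pointwise topology on $\End{\Om AV}$ together with continuity of $\hat f$ (the paper phrases this with nets, you with local constancy into the finite discrete $\ci T(S^A)$). Your last paragraph spells out the $\omega$-power deduction that the paper's statement leaves implicit, which is a harmless elaboration rather than a different approach.
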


\begin{proof}
  Let $\varphi,\psi\in\End{\Om AV}$ and $f\in S^A$. Since
  $\widehat{\hat f\circ\varphi|_A}=\hat f\circ\varphi$, we obtain the
  following chain of equalities:
  $$(\varphi\circ\psi)_S(f)
  =\hat f\circ\varphi\circ\psi|_A
  =\widehat{\hat f\circ\varphi|_A}\circ\psi|_A
  =\psi_S(\hat f\circ\varphi|_A)
  =\psi_S\circ\varphi_S(f),
  $$
  which proves that our mapping is an anti-homomorphism. To prove that
  it is continuous, consider a net limit $\varphi=\lim\varphi_i$
  in~\End{\Om AV}. Then, for every $f\in\ci T(S^A)$ and every $a\in A$,
  we may perform the following computation:
  \begin{align*}
    \varphi_S(f)(a)
    &=
    \hat f(\varphi(a))
    =
    \hat f\bigl((\lim\varphi_i)(a)\bigr)
    =
    \hat f(\lim\varphi_i(a))
    \\
    &=
    \lim\hat f(\varphi_i(a))
    =
    \lim(\varphi_i)_S(f)(a),
  \end{align*}
  which yields the desired equality
  $\varphi_S=\lim(\varphi_i)_S$.
\end{proof}

The following result will be useful to draw structural and
computational information about presentations of the
form~\eqref{eq:V-presentation}. To state it, we require some
further terminology. For a semigroup $S$, we say that a mapping $f\in
S^A$ is a \emph{generating mapping} if $f(A)$ generates~$S$. Given a
pseudovariety of semigroups~\pv V, a subpseudovariety \pv W, and an
endomorphism $\varphi$ of $\Om AV$, let $\varphi_{\pv W}$ be the
unique continuous endomorphism of $\Om AW$ such that $\varphi_{\pv
  W}\circ p=p\circ \varphi$, where $p:\Om AV\to\Om AW$ is the
canonical projection. In particular, if $\varphi\in\End{\Om AW}$, then
$\varphi_\pv W=\varphi$.

\begin{prop}
  \label{p:values-omega-powers-subst}
  Let\/ \pv V and \pv W be pseudovarieties of semigroups such that
  $\pv W\subseteq\pv V$. Let $A$~be a finite alphabet and let
  $\varphi$ be a continuous endomorphism of\/~\Om AV. The following
  are equivalent for an arbitrary semigroup $S$ from~\pv W:
  \begin{enumerate}[(1)]
  \item\label{item:values-omega-powers-subst-1} $S$~is a continuous
    homomorphic image of the semigroup presented by
    \begin{equation} 
     \label{eq:values-omega-powers-subst-presentation}
      \langle A\mid \varphi_\pv W^\omega(a)=a\ (a\in A)\rangle_\pv W;
    \end{equation}
  \item\label{item:values-omega-powers-subst-2} there is some
    generating mapping $f:A\to S$ and some integer~$n$ such that $1\le
    n\le|S^A|$ and $\varphi_S^n(f)=f$;
  \item\label{item:values-omega-powers-subst-3} there is some
    generating mapping $f:A\to S$ and some integer~$n$ such that
    $\varphi_S^n(f)=f$.
  \end{enumerate}
\end{prop}

\begin{proof}
  Let $T$ be the profinite semigroup defined by the presentation
  \eqref{eq:values-omega-powers-subst-presentation} and consider the
  natural homomorphisms $p:\Om AV\to\Om AW$ and $\pi:\Om AW\to T$.

  We begin by proving
  (\ref{item:values-omega-powers-subst-1})\,$\Rightarrow$\,(\ref{item:values-omega-powers-subst-2}).
  Suppose that $\theta:T\to S$ is an onto continuous homomorphism.
  Consider the mapping $f=\theta\circ\pi\circ p|_A\in S^A$, whose
  unique continuous homomorphic extension $\hat f:\Om AV\to S$ is the
  mapping $\theta\circ\pi\circ p$. Since $\varphi_\pv W\circ
  p=p\circ\varphi$, we deduce that
  $\varphi_S^k(f)=\theta\circ\pi\circ\varphi_\pv W^k\circ p|_A$ for every
  $k\ge0$, where we write $\varphi^0$ and $\varphi^0_\pv W$ for the
  identity mappings on \Om XV and~\Om XW, respectively. Hence, for
  every $a\in A$, the following equalities hold:
  $\varphi_S^\omega(f)(a) %
  =\theta\circ\pi\circ\varphi_\pv W^\omega(a) %
  =\theta\circ\pi\circ\varphi_\pv W^0(a) %
  =\varphi_S^0(f)(a)=f(a)$. We have thus proved that
  $\varphi_S^\omega(f)=f$. As $\varphi_S$ is a transformation of the
  set $S^A$, the successive iterates
  $f,\varphi_S(f),\varphi_S^2(f),\ldots,\varphi_S^{|S^A|}(f)$ cannot
  all be distinct and $\varphi_S^\omega(f)$ must be found in the
  sequence on the first repeated point or between it and its first
  repetition. Hence, the equality $\varphi_S^\omega(f)=f$ implies that
  $\varphi_S^n(f)=f$ for some integer $n$ such that $1\le n\le |S^A|$.

  The implication
  (\ref{item:values-omega-powers-subst-2})\,$\Rightarrow$\,(\ref{item:values-omega-powers-subst-3})
  being trivial, it remains to prove the implication
  (\ref{item:values-omega-powers-subst-3})\,$\Rightarrow$\,(\ref{item:values-omega-powers-subst-1}).
  It suffices to show that $\hat f$ factors through $\pi\circ p$.
  Since $S\in\pv W$, $\hat f$ factors through $p$, and we have the
  following commutative diagram, where the existence of the dashed
  arrow $\theta$ is yet to be established:
  \begin{equation*}
    \label{eq:values-omega-powers-subst-2}
    \begin{split}
      \xymatrix{
        \Om AV
        \ar[r]^{\hat f}
        \ar[d]_p
        &
        S
        \\
        \Om AW
        \ar[r]^\pi
        \ar[ru]^\eta
        &
        T
        \ar@{-->}[u]_\theta
      }
    \end{split}
  \end{equation*}
  Thus, it is enough to verify that, for every $a\in A$,
  $\eta(\varphi_\pv W^\omega(a))=\eta(a)$. Taking into account the
  definition of $p$, the desired
  equality is equivalent to $\eta(\varphi_\pv
  W^\omega(p(a)))=\eta(p(a))$. In view of $\varphi_\pv W\circ
  p=p\circ\varphi$ and $\eta\circ p=\hat f$, this translates into the
  equality $\hat f(\varphi^\omega(a))=\hat f(a)$. Indeed, by
  hypothesis, we have $\varphi_S^n(f)=f$ for some~$n$, hence $f$~is
  fixed by all powers of~$\varphi_S^n$ and, therefore, also by
  $\varphi_S^\omega=(\varphi_S^n)^\omega$.
\end{proof}

We say that a profinite semigroup $S$ is \emph{decidable} if there is
an algorithm to determine, for a given finite semigroup $T$, whether
there is a continuous homomorphism from $S$ onto~$T$. For instance, if
\pv V is a pseudovariety of semigroups and $A$~is a finite set, then
\Om AV, the pro-\pv V semigroup freely generated by~$A$, is decidable
if and only if it is decidable whether a finite $A$-generated
semigroup belongs to~\pv V. Thus, the pseudovariety \pv V has a
decidable membership problem if and only if all finitely generated
free pro-\pv V semigroups are decidable.

The following immediate application of
Proposition~\ref{p:values-omega-powers-subst} could be stated, and
essentially proved in the same way, for much more general
presentations. To avoid introducing further notation, we stick here to
the type of presentations in which we are mostly interested.

\begin{cor}
  \label{c:special-presentation->decidable}
  Let $\varphi$ be an endomorphism of the free group $FG(A)$ on a
  finite set~$A$ and let $\hat\varphi$ be its unique extension to a
  continuous endomorphism of\/~\Om AG. Then the profinite group
  presented by $\langle A\mid\hat\varphi^\omega(a)=a\ (a\in
  A)\rangle_\pv G$ is decidable.\qed
\end{cor}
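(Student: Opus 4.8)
The plan is to derive the corollary directly from Proposition~\ref{p:values-omega-powers-subst}, applied with $\pv V=\pv G$. First I would dispose of the trivial part of the reduction: since $G$ is a profinite group, every continuous homomorphic image of $G$ is again a profinite group, so for a given finite semigroup $T$ there can be a continuous homomorphism of $G$ onto $T$ only if $T$ is itself a (finite) group. Deciding whether a given finite semigroup is a group is plainly effective, so it suffices to produce an algorithm in the case where $T$ is a finite group.

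Next I would set up the application of Proposition~\ref{p:values-omega-powers-subst}. Because $\hat\varphi\in\End{\Om AG}$, we have $\hat\varphi_{\pv G}=\hat\varphi$, so the presentation defining $G$ is exactly $\langle A\mid\hat\varphi_{\pv G}^\omega(a)=a\ (a\in A)\rangle$. Taking $\pv V=\pv G$, $\varphi=\hat\varphi$, and $S=T$ in the proposition, the equivalence (\ref{item:values-omega-powers-subst-1})\,$\Leftrightarrow$\,(\ref{item:values-omega-powers-subst-2}) shows that $T$ is a continuous homomorphic image of $G$ if and only if there exist a generating mapping $f\colon A\to T$ and an integer $n$ with $1\le n\le|T^A|$ such that $\hat f\circ\hat\varphi^n|_A=f$.

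The heart of the decidability argument is that this last condition amounts to a terminating finite search. The set $T^A$ is finite and can be listed explicitly, and for each $f\in T^A$ one checks effectively whether $f(A)$ generates~$T$. Since $\varphi$ is determined by the finite data $(\varphi(a))_{a\in A}$, the reduced group word $\hat\varphi^n(a)=\varphi^n(a)\in FG(A)$ is computable for every finite $n$, and evaluating $\hat f$ on such a word is a finite computation inside $T$; thus the transformation $\overline\varphi$ of $T^A$ from Lemma~\ref{l:technical} is genuinely computable from the presentation data. I would then, for each generating mapping $f$, compute the finite orbit $f,\overline\varphi(f),\dots,\overline\varphi^{|T^A|}(f)$ and test whether $f$ recurs. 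By the a priori bound $n\le|T^A|$ supplied by the proposition, this detects exactly the periodic points $f$ with $\overline\varphi^n(f)=f$, equivalently $\hat f\circ\hat\varphi^n|_A=f$, so running over all generating $f$ decides condition~(\ref{item:values-omega-powers-subst-2}), and hence condition~(\ref{item:values-omega-powers-subst-1}).

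There is essentially no serious obstacle here beyond bookkeeping: all the substantive content is already carried by Proposition~\ref{p:values-omega-powers-subst}, whose explicit bound $1\le n\le|T^A|$ is precisely what converts an existential statement into a bounded, terminating search. The only points that deserve an explicit word are the reduction to the case of finite groups~$T$ and the observation that, on the generators, $\hat\varphi^n$ coincides with the combinatorially computable free-group iterate $\varphi^n$, which is what makes the transformation $\overline\varphi$ of $T^A$ effectively computable from the given data.
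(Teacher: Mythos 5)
Your proposal is correct and follows exactly the route the paper intends: the corollary is stated there as an immediate consequence of Proposition~\ref{p:values-omega-powers-subst} (with $\pv V=\pv G$, so that $\hat\varphi_{\pv G}=\hat\varphi$), the bound $1\le n\le|T^A|$ turning the existence condition into a terminating finite search. Your explicit remarks on reducing to finite groups $T$ and on the computability of the iterates via Lemma~\ref{l:technical} are precisely the bookkeeping the paper leaves implicit.
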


\section{Preliminaries on symbolic dynamics}
\label{sec:preliminaries}

Let $A$ be a finite alphabet. We denote by $A^+$ the free semigroup
on~$A$. A \emph{code} is a nonempty subset of~$A^+$ that generates a
free subsemigroup.

The subsemigroup of~\Om AS generated by~$A$ is a free semigroup, and
so we identify it with $A^+$. The elements of $\Om AS\setminus A^+$
are said to be \emph{infinite}, while those of~$A^+$,
which are isolated elements of $\Om AS$, are said to be \emph{finite}.

We may represent an element $x$ of $A^\mathbb{Z}$ as the biinfinite word
$$\cdots x(-2)x(-1)\cdot x(0)x(1)x(2)\cdots.$$
For $x\in A^\mathbb{Z}$ and integers $k,\ell$ with $k\le\ell$, we
denote by $x_{[k,\ell]}$ the word $x(k)x(k+1)\cdots x(\ell)$; a word
of this form is called a \emph{finite block} of~$x$.

A \emph{symbolic dynamical system} \ci X of $\z A$, also called
\emph{subshift} or \emph{shift space} of $\z A$, is a nonempty closed
subset of $\z A$ invariant under the shift operation and its
inverse~\cite{Lind&Marcus:1996}. We denote by $L(\ci X)$ the set of
all finite blocks of elements of $\ci X$.

A subshift \ci X is \emph{minimal} if it does not contain proper
subshifts. There is another useful characterization of minimal
subshifts, with a combinatorial flavor. An element $x\in\z A$ is
\emph{uniformly recurrent} if for every finite block $w$ of $x$, there
is a positive integer $N$ such that $w$ is a factor of every finite
block of $x$ with length~$N$. It turns out that a subshift is minimal
if and only if it is generated by a uniformly recurrent biinfinite
sequence~\cite[Proposition 5.1.13]{Fogg:2002}.

A trivial example of minimal subshift is that of a minimal finite
subshift, generated by a periodic biinfinite word. Such a subshift is
said to be \emph{periodic}.

Given a subshift \ci X and
$u\in L(\ci X)$, say that a nonempty word $v$ is a \emph{return word
  of~$u$ in \ci X} if $vu\in L(\ci X)$, $u$ is a prefix of $vu$ and
$u$ occurs in $vu$ only as a prefix and a suffix. The set of all
return words of~$u$ is denoted $R(u)$.
See~\cite{Balkova&Pelantova&Steiner:2008} for a recent account on
return words. A subshift generated by
an element of $\z A$ is minimal if and only if each of its finite
blocks has a finite set of return words.

The following discussion summarizes results that can be found in
\cite[Section~2]{Almeida:2005c} and
\cite[Section~6]{Almeida&ACosta:2007a}. If the subshift \ci X is
minimal, then the topological closure of $L(\ci X)$ in $\Om AS$ is the
disjoint union of $L(\ci X)$ and a $\ci J$-class $J(\ci X)$ of maximal
regular elements of \Om AS. The correspondence $\ci X\mapsto J(\ci X)$
is a bijection between the set of minimal subshifts of $\z A$ and the
set of maximal regular $\ci J$-classes of $\Om AS$. Moreover, an
infinite element $w$ of~\Om AS belongs to $J(\ci X)$ if and only if
all its finite factors lie in~$L(\ci X)$.

It is natural to ask what is the structure of the (isomorphic) maximal
subgroups of $J(\ci X)$, denoted $G(\ci X)$. Since the expression ``maximal
subgroup of $J(\ci X)$'' refers to a concrete subgroup of the free
profinite semigroup and we wish to investigate its structure as an
abstract profinite group, we prefer to call $G(\ci X)$ the
\emph{Sch\"utzenberger group of~$\ci X$}. This is in accordance with
the literature in semigroup theory in which, more generally, one
associates an abstract group with every \ci D-class of a semigroup,
which is known as its \emph{Sch\"utzenberger group}.

For instance, it is proved in~\cite{Almeida:2005c} that, if \ci X is
an Arnoux-Rauzy subshift of degree~$k$, of which the case $k=2$ is
that of the extensively studied Sturmian subshifts
\cite{Lothaire:2001, Fogg:2002}, then $G(\ci X)$ is a free profinite
group of rank $k$. An example of a minimal subshift \ci X such that
$G(\ci X)$ is not freely generated, with rank two, is also given in
the same paper~\cite[Example 7.2]{Almeida:2005c}.

A \emph{right} (respectively \emph{left}) \emph{infinite word}
is an element of $A^\mathbb{N}$ (resp.~of $A^{\mathbb{Z}^-}$).
Given $w\in\Om AS$, we denote by $\overrightarrow w$ (respectively
$\overleftarrow w$) the right (resp.~left) infinite word whose finite
prefixes (resp.~suffixes) are those of~$w$.

\begin{lema}[{\cite[Lemma~6.6]{Almeida&ACosta:2007a}}]
  \label{l:H-class-vs-x}
  For a minimal subshift $\ci X$, two elements $u,v\in J(\ci X)$ are
  \ci R-equivalent if and only if $\overrightarrow u=\overrightarrow
  v$ and \ci L-equivalent if and only if $\overleftarrow
  u=\overleftarrow v$.
\end{lema}

Taking into account \cite[Lemma~8.2]{Almeida&Volkov:2006}, we deduce
that $w\in J(\ci X)$ lies in a subgroup if and only if the doubly
infinite word $\overleftarrow w\cdot\overrightarrow w$ belongs to~\ci
X. Indeed, $w\in J(\ci X)$ lies in a subgroup if and only if $w^2$
stays in the same \ci J-class, that is it has the same finite factors
as~$w$. Now, by \cite[Lemma~8.2]{Almeida&Volkov:2006}, the finite
factors of~$w^2$ are those of~$w$ together with the products of the
form $uv$, where $u$ is a finite suffix of~$w$ and $v$~is a finite
prefix of~$w$. Thus, altogether, the finite factors of~$w^2$ are the
finite factors of~$\overleftarrow w\cdot\overrightarrow w$.

The maximal subgroups $H$ of $J(\ci X)$ are thus in bijection with
the elements of~\ci X via the mapping that sends $H$ to
$\overleftarrow w\cdot\overrightarrow w$, where $w$ is any element
of~$H$. For $x\in\ci X$, we denote by $H_x$ the maximal subgroup
corresponding to~$x$.

By a \emph{substitution over a finite alphabet $A$} we mean an
endomorphism of the free semigroup $A^+$. The substitution $\varphi$
over the alphabet $A$ is \emph{primitive} if there is a positive
integer $n$ such that, for all $a,b\in A$, $a$ occurs
in~$\varphi^n(b)$ and $\lim|\varphi^n(b)|=\infty$, where $|u|$ denotes
the length of the word $u$. It is well known that to each primitive
substitution $\varphi$ over a finite alphabet $A$, we can associate a
minimal subshift $\ci X_\varphi$. In terms of biinfinite words, there
are some such words that are periodic for the action of~$\varphi$
given by
$$x\mapsto \cdots\varphi(x(-2))\varphi(x(-1))\cdot
\varphi(x(0))\varphi(x(1))\varphi(x(2))\cdots $$
(cf.~\cite[Exercise~1.2.1]{Fogg:2002}) and the subshift $\ci
X_\varphi$~is generated by it. A finite word belongs to the language
$L(\ci X_\varphi)$ if and only if it is a factor of $\varphi^k(a)$ for
all $a\in A$ and all sufficiently large $k\ge1$. Note that
$\varphi(L(\ci X_\varphi))\subseteq L(\ci X_\varphi)$ (see, for
instance \cite[Lemma~4.1(a)]{Almeida:2005c}). We say that $\varphi$ is
\emph{periodic} in case so is $\ci X_\varphi$.

We shall denote $J(\ci X_\varphi)$ and $G(\ci X_\varphi)$ respectively
by $J(\varphi)$ and $G(\varphi)$: this notation is more synthetic and
emphasizes the exclusive dependence of these structures on $\varphi$,
which in turn is a mathematical object completely determined by a
finite amount of data, namely the images (in $A^+$) of letters
by~$\varphi$. Naturally, we also call $G(\varphi)$ the
\emph{Sch\"utzenberger group} of the primitive substitution~$\varphi$.

The unique continuous endomorphism of $\Om AS$ extending $\varphi$
will also be denoted by $\varphi$. A \emph{connection for $\varphi$}
is a word $ba$, with $b,a\in A$, such that $ba\in L(\ci X_\varphi)$,
the first letter of $\varphi^\omega(a)$ is $a$, and the last letter of
$\varphi^\omega(b)$ is $b$. Every primitive substitution has a
connection \cite[Corollary~4.12]{Almeida:2005c}. In terms of the
subshift $\ci X_\varphi$, a connection is simply a word of the form
$x(-1)x(0)$ for some periodic point $x$ of the action of~$\varphi$ on
biinfinite words. For a connection $ba$, the intersection $H_{ba}$ of
the $\ci R$-class containing $\varphi^\omega(a)$ with the $\ci
L$-class containing $\varphi^\omega(b)$ is a maximal subgroup of
$J(\varphi)$. There is a finite power $\tilde\varphi$ of $\varphi$
such that the first letter of $\tilde\varphi(a)$ is $a$ and the last
letter of $\tilde\varphi(b)$ is $b$. We call $\tilde\varphi$ a
\emph{connective power} of~$\varphi$ (with respect to the
connection~$ba$).

We let $X_\varphi(a,b)=b^{-1}(R(ba)b)$.
To avoid overloaded notation, $X_\varphi(a,b)$ will be usually denoted by $X$.
The set $R(ba)$ is easily recognized to be a code and so is
$X=b^{-1}(R(ba)b)$. Let $i$ be the unique homomorphism from the
semigroup freely generated by $X$ into the semigroup freely generated
by $A$ such that $i(x)=x$ for all $x\in X$. Then $i$ is injective,
because $X$ is a code. If $x\in X$ then $\tilde\varphi(x)$ belongs to
the subsemigroup of $A^+$ generated by $X$. Therefore, we can consider
the word $w_x=i^{-1}(\tilde\varphi(x))$, the unique decomposition of
$\tilde\varphi(x)$ in the elements of $X$. The homomorphism $i$ has a
unique extension to a continuous homomorphism $\Om {X}S\to \Om AS$,
which we also denote by~$i$, and which we call the \emph{encoding
  associated with} the connection $ba$.

\begin{teor}[{\cite[Corollary~2.2]{Margolis&Sapir&Weil:1995}}]
  \label{t:MSW}
  The mapping $i$~is injective.
\end{teor}

Let $q$ be the canonical projection $\Om {X}S\to \Om {X}G$, namely the
unique continuous homomorphism from $\Om {X}S$ into $\Om {X}G$ that is
the identity on the generators. Then there are unique continuous
endomorphisms $\tilde\varphi_X$ and $\tilde\varphi_{X,\pv G}$ such
that Diagram~\eqref{eq:hat-varphi} commutes. More explicitly, for each
$x\in X$ we have $\tilde\varphi_X(x)=w_x$ and $\tilde\varphi_{X,\pv
  G}(x)=w_x$, where we regard $w_x$ as a semigroup word and a group word,
respectively.
\begin{equation}\label{eq:hat-varphi}
  \begin{split}
    \xymatrix{
      \Om AS
      \ar[d]_{\tilde\varphi}
      &
      \Om {X}S
      \ar[l]_i
      \ar[r]^q
      \ar[d]^{\tilde\varphi_X}
      &
      \Om {X}G
      \ar[d]^{\tilde\varphi_{X,\pv G}}
      \\
      \Om AS
      &
      \Om {X}S
      \ar[l]_i
      \ar[r]^q
      &
      \Om {X}G
    }    
  \end{split}
\end{equation}

\section{Maximal
subgroups fixed by powers of primitive substitutions}
\label{sec:max-subgps-as-retracts}

Let $A$ be a finite alphabet and let $\ci X\subseteq A^\mathbb{Z}$ be
a subshift. Given a word $u\in L(\ci X)$, let $n$ be a nonnegative
integer less than or equal to the length of~$u$. Let $u_1$ and $u_2$
be words such that $u=u_1u_2$ and $|u_1|=n$. An \emph{$n$-delayed
  return word of~$u$ in~\ci X} is a word $v$ such that $u_1vu_2\in
L(\ci X)$ and $u_1v\in R(u)u_1$ (see
\cite[Definition~11]{Durand&Host&Skau:1999}). The set of $n$-delayed
return words of~$u$ in~\ci X shall be denoted by $R(n,u)$ or
$R(u_1,u_2)$. Note that
\begin{equation*}
  \label{eq:return-words-versus-generalized-ones}
  R(u_1,u_2)=u_1^{-1}(R(u)u_1),
\end{equation*}
thus $R(u_1,u_2)$ and $R(u)$ have the same cardinality.

\begin{lema}\label{l:return-words}
  Let $\ci X$ be a minimal subshift of \z A. Let $v\in\Om AS$ be an
  element of a maximal subgroup of $J(\ci X)$. If $u_1$ and $u_2$ are
  words such that $u_1$ is a suffix and $u_2$ is a prefix of $v$, then
  $v$ belongs to
  $\overline{\langle R(u_1,u_2)\rangle}$.
\end{lema}

\begin{proof}
  Since $v$ lies in a subgroup, $v^3$ also belongs to $J(\ci X)$,
  whence so does $u_1vu_2$.
  The set $\overline{L(\ci X)}$ is closed under taking factors
  by~\cite[Proposition~2.4]{Almeida&ACosta:2007a}, and so there is a sequence
  $(w_n)$ of words in~$L(\ci X)$ that converges to~$u_1vu_2$. We may
  as well assume that $u_1u_2$ is a prefix and a suffix of each $w_n$.
  Hence, $w_n(u_1u_2)^{-1}$ is a product of words in~$R(u_1u_2)$ and,
  therefore, $u_1^{-1}w_nu_2^{-1}$ belongs to the subsemigroup
  generated by $R(u_1,u_2)$, from which the lemma follows by
  \cite[Exercise~10.2.10]{Almeida:1994a}.
\end{proof}

We recall that the evaluation mapping
\begin{eqnarray}\label{eq:evaluation-mapping-is-continuous}
  \Om MS\times (\Om AS)^{M}&\to&\Om AS\\
  (w,v_1,\ldots,v_M)&\mapsto&w(v_1,\ldots,v_M)\notag
\end{eqnarray}
is continuous for every positive integer
$M$~\cite[Subsection~2.3]{Almeida:1999c}.

\begin{prop}
  \label{p:generation-by-limits-of-return-words}
  Let $\ci X$ be a minimal non-periodic subshift of \z A and let
  $x\in\ci X$. Suppose there are strictly increasing sequences of
  positive integers $(p_n)_n$ and $(q_n)_n$ such that
  $R(p_n,x_{[-p_n,q_n]})$ has exactly $M$ elements
  $r_{n,1},\ldots,r_{n,M}$, for every $n$. Let $(r_{1},\ldots,r_{M})$
  be an arbitrary accumulation point of the sequence
  $(r_{n,1},\ldots,r_{n,M})_{n}$ in $(\Om AS)^M$. Then
  $\overline{\langle r_{1},\ldots,r_{M}\rangle}$ is the maximal
  subgroup $H_x$ of $J(\ci X)$.
\end{prop}

\begin{proof}
  Clearly the proof needs only to deal with the case where the
  sequence $(r_{n,1},\ldots,r_{n,M})_{n}$ converges to
  $(r_{1},\ldots,r_{M})$. Since $r_{n,i}\in L(\ci X)$ for all $n$, we
  know that $r_i\in\overline{L(\ci X)}$. Let
  $p_{(n,i)}=\min\{p_n,|r_{n,i}|\}$ and
  $q_{(n,i)}=\min\{q_n+1,|r_{n,i}|\}$. By
  \cite[Lemma~3.2]{Durand:1998},
  \begin{equation*}
    \lim\min\{|v|: v\in R(p_n,x_{[-p_n,q_n]})\}=\infty.
  \end{equation*}
  Hence, $r_i\in J(\ci X)$ and $\lim p_{(n,i)}=
  \lim q_{(n,i)}= \infty$. Since for all~$n$ the word
  $x_{[0,q_{(n,i)}]}$ is a prefix of~$r_{n,i}$ and
  $x_{[-p_{(n,i)},-1]}$ is a suffix of~$r_{(n,i)}$, we obtain
  $r_i\in H_x$ by definition of~$H_x$.

  Let $g$ be an element of~$H_x$. By Lemma~\ref{l:return-words}, there
  is an element $w_n\in\Om MS$ such that
  $g=w_n(r_{n,1},\ldots,r_{n,M})$. Let $w$ be an accumulation point of
  $(w_n)_n$ in $\Om MS$. Since the evaluation
  mapping~\eqref{eq:evaluation-mapping-is-continuous} is continuous,
  it follows that $g=w(r_1,\ldots,r_M)$. This proves that
  $H_x=\overline{\langle r_{1},\ldots,r_{M}\rangle}$.
\end{proof}

The following result shows that a primitive substitution $\varphi$
induces natural actions on certain maximal subgroups of~$J(\varphi)$.

\begin{lema}
  \label{l:back-to-H}
  Let $\varphi$ be a primitive substitution and let $ba$ be a
  connection for~$\varphi$.  If $\tilde\varphi$ is a connective power
  of~$\varphi$, then $\tilde\varphi(H_{ba})\subseteq H_{ba}$.
\end{lema}

\begin{proof}
  Let $u$ be a word from~$R(b,a)$. Then $\varphi^n(u)$ belongs to
  $L(\ci X_\varphi)$ for every~$n$. Hence, $\varphi^\omega(u)$ belongs
  to~$J(\varphi)$. Since $u$ starts with $a$ and ends with~$b$, it
  follows that $\varphi^\omega(u)\in H_{ba}$.
  Let $K=\tilde\varphi(H_{ba})$. Since, by
  \cite[Proposition~4.2]{Almeida:2005c}, $\varphi$ maps $J(\varphi)$
  to itself, $K$ is a subgroup of~\Om AS contained in~$J(\varphi)$.
  Thus, since $H_{ba}$~is a maximal subgroup of~\Om AS, to show that
  $\tilde\varphi(H_{ba})\subseteq H_{ba}$, it suffices to show that
  $K\cap H_{ba}$ is  nonempty. Indeed,
  $\tilde\varphi\varphi^\omega(u)=\varphi^\omega\tilde\varphi(u)$
  belongs to~$K$, by definition of~$K$, and to~$H_{ba}$, since
  $\tilde\varphi$ is a connective power of~$\varphi$.
\end{proof}

Let $\varphi$ be a primitive substitution over $A$.
A \emph{biinfinite fixed point of $\varphi$} is an element $x$
of $A^\mathbb{Z}$ such that
$x_{[0,n]}$ is a prefix of $\varphi(x_{[0,n]})$
and
$x_{[-n,-1]}$ is a suffix of $\varphi(x_{[-n,-1]})$, for every positive
integer $n$.

Given a biinfinite word $x\in A^\mathbb{Z}$ and a positive integer
$\ell$, let $\sim_\ell$ be the equivalence relation on~$\mathbb{Z}$
defined by $i\sim_\ell j$ if
$x_{[i-\ell,i+\ell]}=x_{[j-\ell,j+\ell]}$. Note that, for $k>\ell$,
$\sim_k$ refines $\sim_\ell$.

Suppose additionally that $x$ is a biinfinite fixed point of
$\varphi$. Following the notation of~\cite{Fogg:2002},\footnote{The
  minus sign in~$-|\varphi(x_{[-n,-1]})|$ in the formula for
  $E_1(\varphi)$ is missing in~\cite[Section~7.2.1]{Fogg:2002}. The
  correct formulation can be found
  in~\cite[Section~2.4]{Durand&Host&Skau:1999}.} let
$$E_1(\varphi)=\{0\}\cup
\bigcup_{n\ge
  1}\bigl\{-|\varphi(x_{[-n,-1]})|,|\varphi(x_{[0,n-1]})|\bigr\}.
$$
The substitution $\varphi$ is said to be
\emph{bilaterally recognizable} if there exists
$\ell>0$ such that $E_1$ is a union of $\sim_\ell$-classes.
Denote by $\ell(\varphi)$ the least possible value of $\ell$.

The following result of Moss\'e~\cite{Mosse:1992,Mosse:1996},
stated in \cite[Theorem 7.2.2]{Fogg:2002},
will be crucial in the sequel.

\begin{teor}\label{t:mosse}
  Every non-periodic primitive substitution with a biinfinite fixed
  point is bilaterally recognizable.
\end{teor}

In the case of non-periodic primitive substitutions, the following
consequence of Theorem~\ref{t:mosse} provides the key tool to prove
the reverse inclusion of that given by Lemma~\ref{l:back-to-H}.

\begin{prop}
  \label{p:in-Im-phi}
  Let $\varphi$ be a non-periodic primitive substitution and let $ba$ be a
  connection for~$\varphi$. If $\tilde\varphi$ is a connective power
  of~$\varphi$, then $H_{ba}\subseteq \Im{\tilde\varphi}$.
\end{prop}

\begin{proof}
  Let $x$ be the unique element of the subshift~$\ci X_\varphi$ such
  that
  $H_x=H_{ba}$. By Lemma~\ref{l:back-to-H}, $x$ is also the biinfinite word
  $$\cdots\tilde\varphi(x(-2))\tilde\varphi(x(-1))\cdot
  \tilde\varphi(x(0))\tilde\varphi(x(1))\tilde\varphi(x(2))\cdots.$$
  Therefore, $x$ is a biinfinite fixed point of $\tilde\varphi$.
  By~Theorem~\ref{t:mosse}, $\tilde\varphi$ is bilaterally
  recognizable. 
  
  By~\cite[Proposition 25 and Theorem 24]{Durand&Host&Skau:1999},
  the sequence
  $|R(n,x_{[-n,n]})|$ is bounded. Hence, there is a strictly increasing
  sequence $(p_n)$ for which
  $|R(p_n,x_{[-p_n,p_n]})|$ is
  a constant $M$, and such that $p_n>\ell(\tilde\varphi)$ for all $n$.

  Let $R(p_n,x_{[-p_n,p_n]})=\{r_{n,1},\ldots,r_{n,M}\}$.
    Let $k\in\{1,\ldots, M\}$.
    Because $x$
    is uniformly recurrent,
    there are $i>0$ and $j>i$ such that
    \begin{equation}
      \label{eq:recurrence}
      x_{[-p_n,p_n]}=x_{[i-p_n,i+p_n]}=x_{[j-p_n,j+p_n]}
    \end{equation}
    and $r_{n,k}=x_{[i,j-1]}$. Since $0\in E_1(\tilde\varphi)$ and
    $p_n>\ell(\tilde\varphi)$, it follows from (\ref{eq:recurrence})
    that $i,j\in E_1(\tilde\varphi)$. As $r_{n,k}=x_{[i,j-1]}$, we
    conclude that $r_{n,k}$ belongs to $\Im{\tilde\varphi}$.

    Let $(r_{1},\ldots,r_{M})$ be an accumulation point of the
    sequence $(r_{n,1},\ldots,r_{n,M})_{n}$. Since
    $\Im{\tilde\varphi}$ is closed in $\Om AS$, we have $r_k\in
    \Im{\tilde\varphi}$ for all $k$. It then follows from
    Proposition~\ref{p:generation-by-limits-of-return-words} that
    $H_{ba}\subseteq \Im{\tilde\varphi}$.
\end{proof}

We can now establish the announced reverse inclusion of that given by
Lemma~\ref{l:back-to-H} in the case of non-periodic primitive
substitutions.

\begin{teor}
  \label{t:in-Im-phi}
  Let $\varphi$ be a non-periodic primitive substitution.
  Consider a connection $ba$ for $\varphi$
  and a connective power~$\tilde\varphi$.
  Then $H_{ba}=\tilde\varphi(H_{ba})=\varphi^\omega(H_{ba})$.
\end{teor}

\begin{proof}
  Let $k$ be a positive integer. Then $\tilde\varphi^{k}$ is also a
  connective power of $\varphi$ relatively to the connection $ba$.
  Therefore, by Proposition~\ref{p:in-Im-phi}, we obtain the
  inclusion~$H_{ba}\subseteq \Im {\tilde\varphi^{k}}$. Hence, given
  $g\in H_{ba}$, for each positive integer~$k$, there is $u_k\in\Om
  AS$ such that $g=(\tilde\varphi)^{k!}(u_k)$. Since the evaluation
  mapping on continuous endomorphisms of finitely generated profinite
  semigroups is continuous (cf.~\cite[Proposition~1]{Hunter:1983}),
  there is an accumulation point $u$ of the sequence $(u_k)$ such that
  $g=\tilde\varphi^\omega(u)=\varphi^\omega(u)$. Hence, we have
  $\varphi^{\omega}(g)=g$. This proves the equality
  $H_{ba}=\varphi^\omega(H_{ba})$.
  
  By Lemma~\ref{l:back-to-H}, the inclusion
  $\tilde\varphi(H_{ba})\subseteq H_{ba}$ holds.
  Hence, $\tilde\varphi^{k!-1}(H_{ba})\subseteq H_{ba}$ holds
  for all $k\ge 1$, which shows that
  $\tilde\varphi^{\omega-1}(H_{ba})\subseteq H_{ba}$ because
  $H_{ba}$ is closed.
  We then have
  $$H_{ba}=\varphi^\omega(H_{ba})=
  \tilde\varphi(\tilde\varphi^{\omega-1}(H_{ba}))
  \subseteq \tilde\varphi(H_{ba}),
  $$
  which, together with
  Lemma~\ref{l:back-to-H},
  establishes the equality
  $H_{ba}=\tilde\varphi(H_{ba})$.
\end{proof}

The first author \cite[Theorem~4.13]{Almeida:2005c} managed to avoid
using Moss\'e's Theorem~\ref{t:mosse} to obtain the equality
$H_{ba}=\varphi^\omega(H_{ba})$ by adding the extra synchronization hypothesis
that $\varphi$ is an ``encoding of bounded delay with respect to the
finite factors of~$J(\varphi)$'' (cf.~\cite{Almeida:2005c}). This
restriction turns out not to be significant in case $\varphi$ induces
an automorphism of the free group $FG(A)$, because then the extra
hypothesis always holds \cite[Corollary~5.6]{Almeida:2005c}.

Theorem~\ref{t:in-Im-phi}
fails if $\varphi$ is periodic. For example, consider the periodic primitive
substitution defined by $\varphi(a)=aba$ and $\varphi(b)=bab$.
Then $ba$ is a connection for $\varphi$, and $X_\varphi(a,b)=\{ab\}$.
Note that $\varphi^n(ab)=(ab)^{3^n}$, for every positive integer $n$.
By the definition of
$H_{ba}$, we know that $K=\overline{\langle\varphi^\omega(ab)\rangle}$ is a
closed subgroup of $H_{ba}$. 
Note that $(ab)^{\omega+1}$ is $\ci H$-equivalent to 
$\varphi^\omega(ab)$, that is $(ab)^{\omega+1}\in H_{ba}$.
Note also that $(ab)^{\omega+1}\notin K$.
If we had $H_{ba}\subseteq \Im{\varphi^\omega}$, then
we would have
$(ab)^{\omega+1}=\varphi^\omega(ab)^{\omega+1}
\in K$, a contradiction. This shows the necessity of the
non-periodicity hypothesis in Theorem~\ref{t:in-Im-phi}.

\section{Presentations of Sch\"utzenberger groups of primitive
  substitutions}
\label{sec:core}

By Corollary~\ref{c:there-is-some-presentation-of-that-kind}, every
finitely generated maximal subgroup of~\Om XS admits a finite
presentation of the form \eqref{eq:G-presentation}. However, to be
able to apply the decidability results of
Section~\ref{sec:decidability}, one needs computability properties of
the continuous endomorphism $\Phi$ of~\Om XG. In this section, we show
that this is always possible for the Sch\"utzenberger group of an
arbitrary primitive substitution over a finite alphabet.

We separate into two subsections the general case, which involves
return words, and a special case, in which the idempotent iterate
$\varphi^\omega$ of the substitution $\varphi$ maps all letters to the
same \ci H-class. In the special case, the presentation can be
expressed more directly in terms of the given substitution. In the
third subsection, we show that one can actually obtain the general
case from the special one.

\subsection{The general case}
\label{sec:general-substitutions}

We first apply the simple remarks of
Subsection~\ref{sec:presentations-general} to obtain a semigroup
presentation for a profinite subgroup associated with a primitive
substitution $\varphi$ and a connection of~$\varphi$.

\begin{prop}
  \label{p:generic}
  Let $\varphi$ be a primitive substitution over the alphabet~$A$,
  $ba$ be a connection for~$\varphi$, and $\tilde\varphi$ be a
  connective power of~$\varphi$. Put $X=X_\varphi(a,b)$ and
  $H=\Im{(\varphi^\omega\circ i)}$, where $i$ is the encoding
  associated with $ba$. Then $\KerS{(\varphi^\omega\circ i)}
  \subseteq\KerS{\tilde\varphi_X^\omega}$ and so $H$ admits the
  presentation
  \begin{equation}
    \label{eq:semigroup-presentation}
    \langle X\mid \tilde\varphi_X^\omega(x)=x\ (x\in X)\rangle_\pv S.
  \end{equation}
\end{prop}

\begin{proof}
  Note that $\varphi^\omega(i(X))$ is contained in~$H_{ba}$
  \cite[Proposition~4.8(1)]{Almeida:2005c}, whence $H$ is a subgroup
  of~\Om AS. Moreover, $\tilde\varphi$ acts as an automorphism on~$H$
  and as an endomorphism of~$\Im i$. We obtain the following
  commutative diagram, where the commutativity of the outer rectangle
  follows from that of the largest trapezoid.
  $$
  \xymatrix@R=1.7em@C=1.9em{
    \Om XS
    \ar[rrr]^{\tilde\varphi_X^{\omega+1}}
    \ar[ddd]_{\tilde\varphi^\omega\circ i}
    &&&
    \Om XS
    \ar[ddd]^{\tilde\varphi^\omega\circ i}
    \\
    &
    \Om XS
    \ar[r]^{\tilde\varphi_X}
    \ar[d]^i
    \ar[ldd]_{\tilde\varphi^\omega\circ i}
    &
    \Om XS
    \ar[d]_i
    \ar[rdd]^{\tilde\varphi^\omega\circ i}
    &
    \\
    &
    \Im i
    \ar[r]^{\tilde\varphi}
    \ar[ld]^{\tilde\varphi^\omega}
    &
    \Im i
    \ar[rd]_{\tilde\varphi^\omega}
    &
    \\
    H
    \ar[rrr]_{\tilde\varphi=\tilde\varphi^{\omega+1}}
    &&&
    H
  }
  $$
  Let $(u,v)\in\KerS(\tilde\varphi^\omega\circ i)$. We claim that
  $(u,v)\in\KerS {\tilde\varphi_X^\omega}
  =\KerS{\tilde\varphi_X^{\omega+1}}$. Indeed, since $i$~is injective
  by Theorem~\ref{t:MSW}, it suffices to show that $u$ and $v$ have
  the same image under $i\circ\tilde\varphi_X^\omega$. Now, by the
  commutativity of the diagram, the following holds for an arbitrary
  $w\in\Om XS$: $i\circ\tilde\varphi_X^\omega(w)
  =\tilde\varphi^\omega\circ i(w)$. Combining with the hypothesis that
  $\tilde\varphi^\omega\circ i(u)=\tilde\varphi^\omega\circ i(v)$, we
  deduce that
  $i\circ\tilde\varphi_X^\omega(u)=i\circ\tilde\varphi_X^\omega(v)$.
  We have thus shown that $\KerS(\tilde\varphi^\omega\circ i)
  \subseteq\KerS{\tilde\varphi_X^\omega}$. To conclude the proof, it
  suffices to invoke Lemma~\ref{l:equivalent-conditions-s}.
\end{proof}

We are now ready for the main theorem of this paper.

\begin{teor}
  \label{t:conjecture}
  Let $\varphi$ be a non-periodic primitive substitution over the
  alphabet $A$. Let $ba$ be a connection of~$\varphi$ and let
  $X=X_\varphi(a,b)$. Then $G(\varphi)$ admits the presentation
  \begin{equation}
    \label{eq:conjecture-presentation}
    \langle X\mid \tilde\varphi_{X,\pv G}^\omega(x)=x\ (x\in
    X)\rangle_\pv G,
  \end{equation}
  where $\tilde\varphi$ is a connective power of~$\varphi$.
\end{teor}

\begin{proof}
  Let $H=\Im{(\varphi^\omega\circ i)}$. As in the proof of
  Proposition~\ref{p:generic}, we know that $H$ is contained
  in~$H_{ba}$. On the other hand, by Lemma~\ref{l:return-words},
  $H_{ba}$ is contained in~$\Im i$, whence
  $\varphi^\omega(H_{ba})\subseteq\Im{(\varphi^\omega\circ i)}=H$. By
  Theorem~\ref{t:in-Im-phi}, it follows that $H=H_{ba}$. Hence, $H$ is
  the Sch\"utzenberger group $G(\varphi)$. According to
  Proposition~\ref{p:generic}, $H$ admits the profinite semigroup
  presentation~\eqref{eq:semigroup-presentation}.
  Lemma~\ref{l:folklore} yields that $H$ admits the presentation
  $$\langle X\mid q(\tilde\varphi_X^\omega(x))=q(x)\ (x\in
  X)\rangle_\pv G,$$
  where $q:\Om XS\to\Om XG$ is the canonical projection. In view of
  commutativity of Diagram~\eqref{eq:hat-varphi}, and noting also that
  $q(x)=x$ for each $x\in X$, it remains to observe that the above
  presentation is just a reformulation
  of~\eqref{eq:conjecture-presentation}.
\end{proof}

\subsection{The case of proper substitutions}
\label{sec:proper-substitutions}

This subsection is dedicated to a special case in which the
Sch\"utzenberger group of the primitive substitution is realized as a
retract of the free profinite semigroup under the $\omega$-power of
the substitution. This leads to a somewhat simpler presentation of the
form~\eqref{eq:presentation}.

We say that a substitution $\varphi$ over a finite alphabet $A$ is
\emph{proper} if there are letters $a,b\in A$ such that, for every
$d\in A$, the word $\varphi(d)$ starts with $a$ and ends with~$b$.

\begin{lema}
  \label{l:proper-image-maximal-subgroup}
  Let $\varphi$ be a non-periodic proper substitution over a finite
  alphabet $A$. Then $\Im{\varphi^\omega}$ is a maximal subgroup
  of~\Om AS contained in~$J(\varphi)$.
\end{lema}

\begin{proof}
  By \cite[Proposition~5.3]{Almeida&Volkov:2006}, all the elements
  of~\Om AS of the form $\varphi^\omega(a)$ ($a\in A$) lie in the same
  maximal subgroup $H$. By Theorem~\ref{t:in-Im-phi}, the image of
  $\varphi^\omega$ is $H$.
\end{proof}

The special case of the following result where $\varphi$ is an
encoding of bounded delay with respect to the finite factors of
$J(\varphi)$ was announced in a lecture by the first author at the
\emph{Fields Workshop on Profinite Groups and Applications} (Carleton
University, August 2005). Its proof appears here for the first time,
and furthermore does not depend on that hypothesis.

\begin{teor}
  \label{t:presentation-AV-case}
  Let $\varphi$ be a non-periodic proper primitive substitution over a
  finite alphabet~$A$.
  Then $G(\varphi)$ admits the presentation
  \begin{equation}
    \label{eq:presentation-AV-case}
      \langle A\mid \varphi_{\pv G}^\omega(a)=a\ (a\in A)\rangle_\pv G.
  \end{equation}
\end{teor}

\begin{proof}
  By Lemma~\ref{l:proper-image-maximal-subgroup},
  $H=\Im{\varphi^\omega}$ is a maximal subgroup of~\Om AS and it is
  also the Sch\"utzenberger group of~$\varphi$. In particular,
  $\varphi$ acts on~$H$ as an automorphism.

  Consider the following commutative diagram
  $$
  \xymatrix{
    \Om AS
    \ar[r]^{\varphi^{\omega+1}}
     \ar[d]_{\varphi^\omega}
    &
    \Om AS
    \ar[d]^{\varphi^\omega}
    \\
    H
    \ar[r]^\varphi
    &
   \, H.
  }
  $$
  Since $\KerS\varphi^\omega\subseteq\KerS\varphi^{\omega+1}$, it
  follows from Lemma~\ref{l:equivalent-conditions-s} that $H$ admits
  the presentation $\langle A\mid \varphi^\omega(a)=a\ (a\in
  A)\rangle_\pv S$. Applying Lemma~\ref{l:folklore}, we deduce that
  $H$ can also be presented as $\langle A\mid
  p(\varphi^\omega(a))=p(a)\ (a\in A)\rangle_\pv G$. Finally, since
  $p\circ\varphi=\varphi_\pv G\circ p$ and $p(a)=a$ for every $a\in
  A$, the latter presentation coincides with
  \eqref{eq:presentation-AV-case}.
\end{proof}

\subsection{A reduction to the proper case}
\label{sec:reduction-to-proper-case}

In this subsection, we show how to reduce the case of a general
primitive substitution to that of a proper primitive substitution,
thus providing an alternative proof of Theorem~\ref{t:conjecture}
based on Theorem~\ref{t:presentation-AV-case}. The first ingredient is
the following lemma, which can be extracted from
\cite[Lemma~21]{Durand&Host&Skau:1999}, noting that the definition of
proper substitution adopted in that paper translates in the language
of the present paper as a substitution which admits a power which is
proper.

\begin{lema}
  \label{l:phiX-proper-primitive}
  Let $\varphi$ be a primitive substitution over a finite alphabet $A$
  and let $ba$ be a connection for~$\varphi$. Let $X=X_\varphi(a,b)$
  and suppose that $\tilde\varphi$ is a connective power of~$\varphi$
  such that
  $\min\{|\tilde\varphi(a)|,|\tilde\varphi(b)|\}>\max\{|x|:x\in X\}$.
  Then $\tilde\varphi_X$ is a proper primitive substitution over the
  alphabet $X$. The subshift $\ci X_\varphi\subseteq A^\mathbb{Z}$ is
  periodic if and only if so is $\ci X_{\tilde\varphi_X}\subseteq
  X^\mathbb{Z}$.
\end{lema}

Provided $\varphi$ is non-periodic, for a choice of $\tilde\varphi$ as
in Lemma~\ref{l:phiX-proper-primitive}, we may apply 
Lemma~\ref{l:proper-image-maximal-subgroup} to conclude that
$\Im{\tilde\varphi_X^\omega}=\Im{\varphi_X^\omega}$ is a maximal
subgroup of $J(\varphi_X)$, which we denote by~$K$.

On the other hand, Theorem~\ref{t:in-Im-phi} shows that, for a
connection $ba$ of~$\varphi$, the maximal subgroup $H_{ba}$
is such that $H_{ba}=\varphi^\omega(H_{ba})$.
Hence, for the encoding $i$ associated with~$ba$, we have
$$H_{ba}=\varphi^\omega(H_{ba})\subseteq \varphi^\omega(i(\Om XS))
=i(\varphi_X^\omega(\Om XS))=i(K).$$
Since $K$ is a subgroup and $H_{ba}$ is a maximal subgroup, we have
$i(K)=H_{ba}$. As $i$ is injective by Theorem~\ref{t:MSW}, we obtain the
following result.

\begin{teor}
  \label{t:Gphi-vs-GphiX}
  Let $\varphi$ be a non-periodic primitive substitution over a finite
  alphabet and let $\tilde\varphi_X$ be as in
  Lemma~\ref{l:phiX-proper-primitive}. Then the encoding $i$
  associated with the connection~$ba$ defines an isomorphism between a
  maximal subgroup of~$J(\ci X_{\tilde\varphi_X})$ and~$H_{ba}$.\qed
\end{teor}

Combining Theorems~\ref{t:presentation-AV-case}
and~\ref{t:Gphi-vs-GphiX}, it is now immediate to obtain an
alternative way to deduce Theorem~\ref{t:conjecture}.

\section{Applications}
\label{sec:applications}

This section is devoted to applications of the main results of
Section~\ref{sec:core}.

\subsection{Decidability of Sch\"utzenberger groups of primitive substitutions}
\label{sec:dec-Schutz-groups}

The following result is our main motivation for obtaining
presentations of Sch\"utzenberger groups. The periodic case of the
following theorem follows from the fact that the corresponding
Sch\"utzenberger group is a free procyclic group
\cite[Theorem~7.5]{Almeida&Volkov:2006}. For this reason, the two
alternative proofs presented below handle only the non-periodic case.

\begin{teor}
  \label{t:dec-Schutz-groups}
  Let $\varphi$ be a primitive substitution over a finite alphabet.
  Then the profinite group $G(\varphi)$ is decidable.
\end{teor}

\begin{proof}[First proof]
  By~\cite[Lemmas~3.3 and~4.5]{Almeida:2005c}, one may effectively
  compute a connection $ba$ for~$\varphi$ and the set
  $X=X_\varphi(a,b)$. Thus, to conclude the proof, it suffices to
  invoke Corollary~\ref{c:special-presentation->decidable} taking into
  account Theorem~\ref{t:conjecture}.
\end{proof}

\begin{proof}[Second proof]
  Assuming that the substitution $\varphi$~is proper,
  Corollary~\ref{c:special-presentation->decidable} combined with
  Theorem~\ref{t:presentation-AV-case}, yields that the group
  $G(\varphi)$ is decidable. To obtain the general case, we invoke a
  result from symbolic dynamics which states that, for every primitive
  substitution $\varphi$, one can effectively compute a proper
  primitive substitution $\psi$ such that the subshifts $\ci
  X_\varphi$ and $\ci X_\psi$ are conjugate
  \cite{Durand&Host&Skau:1999} (see also
  \cite[Proposition~31]{Durand:2000}). Since the profinite groups
  $G(\varphi)$ and $G(\psi)$ are isomorphic by
  \cite[Theorem~3.11]{ACosta:2006}, and the latter is decidable, so is
  the former.
\end{proof}

Note that, while the first proof depends less on results on symbolic
dynamics, the second proof does not depend on the injectivity of the
encoding $i$ associated with the connection (Theorem~\ref{t:MSW}). By
using also the injectivity of~$i$, one may modify the second proof by
applying instead Theorem~\ref{t:Gphi-vs-GphiX} to obtain the
isomorphism of $G(\varphi)$ with a decidable profinite group.

\subsection{A first non-relatively free example}
\label{sec:ab-aaab}

We give an example to illustrate how to apply
Theorem~\ref{t:presentation-AV-case} to prove that the
Sch\"utzenberger group of a primitive substitution is not relatively
free. Let $A=\{a,b\}$ and define a substitution $\varphi$ by
$\varphi(a)=ab$ and $\varphi(b)=a^3b$, which is non-periodic and
proper primitive. Hence, by Theorem~\ref{t:presentation-AV-case}, the
group $G(\varphi)$ admits the presentation
$$
\langle a,b\mid 
\varphi_{\pv G}^\omega(a)=a,\,
\varphi_{\pv G}^\omega(b)=b
\rangle_\pv G.
$$
It is shown in~\cite[Example~7.2]{Almeida:2005c} that $G(\varphi)$ is
not a free profinite group. We proceed to improve this result by
showing that it is not relatively free, that is, not of the form \Om
XV, although in fact we do not know whether the pseudovariety
generated by all its finite continuous homomorphic images is a proper
subclass of~\pv G.

\begin{teor}
  \label{t:eg:1}
  Let $\varphi$ be the substitution given by $\varphi(a)=ab$ and
  $\varphi(b)=a^3b$. Then $G(\varphi)$ is not a relatively free
  profinite group.
\end{teor}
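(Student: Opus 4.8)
The plan is to argue by contradiction, using two ingredients: the presentation $G(\varphi)=\langle a,b\mid\varphi_{\pv G}^\omega(a)=a,\ \varphi_{\pv G}^\omega(b)=b\rangle$ from Example~\ref{eg:1}, and the combinatorial description of the finite continuous images of such a group furnished by Proposition~\ref{p:values-omega-powers-subst}. Writing $A=\{a,b\}$, that proposition says that a finite group $S$ is a continuous homomorphic image of $G(\varphi)$ if and only if there are a generating mapping $f\colon A\to S$ and an integer $n\ge1$ with $\hat f\circ\varphi^n|_A=f$. So suppose, towards a contradiction, that $G(\varphi)\cong\Om YH$ is relatively free with respect to some pseudovariety of groups $\pv H$ and some finite set $Y$, necessarily nonempty since $G(\varphi)$ is nontrivial.

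First I would observe that $\ZZ/2\ZZ$ is \emph{not} a continuous image of $G(\varphi)$. The incidence matrix of $\varphi$ reduces modulo $2$ to $\left(\begin{smallmatrix}1&1\\1&1\end{smallmatrix}\right)$, whose square is the zero matrix; inspecting the cases $n=1$ and $n\ge2$ then shows that no nonzero $f\colon A\to\ZZ/2\ZZ$ can satisfy $\hat f\circ\varphi^n|_A=f$, so the criterion of Proposition~\ref{p:values-omega-powers-subst} fails for every generating $f$.

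The heart of the proof is to produce an image of $G(\varphi)$ of \emph{even} order; I claim the alternating group $A_4$ is one. Take $f(a)=(123)$ and $f(b)=(12)(34)$, which generate $A_4$, and set $n=2$. From $\varphi^2(a)=ab\,a^3b$ and $\varphi^2(b)=(ab)^3a^3b$, together with $f(a)^3=1$, $f(b)^2=1$ and $f(a)f(b)=(134)$ (an element of order $3$), a direct computation gives $\hat f(\varphi^2(a))=f(a)f(b)f(a)^3f(b)=f(a)f(b)^2=f(a)$ and $\hat f(\varphi^2(b))=(f(a)f(b))^3f(a)^3f(b)=f(b)$. Thus $\hat f\circ\varphi^2|_A=f$, and Proposition~\ref{p:values-omega-powers-subst} yields that $A_4$ is a continuous homomorphic image of $G(\varphi)$.

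It then remains to close the loop. Being a continuous image of $G(\varphi)\cong\Om YH$, the group $A_4$ lies in $\pv H$; since $|A_4|$ is even, Cauchy's theorem provides a subgroup isomorphic to $\ZZ/2\ZZ$, and as pseudovarieties are closed under subgroups we get $\ZZ/2\ZZ\in\pv H$. But $\ZZ/2\ZZ$ is generated by one element and $Y\neq\emptyset$, so relative freeness forces $\ZZ/2\ZZ$ to be a continuous image of $\Om YH\cong G(\varphi)$, contradicting the second paragraph. Hence $G(\varphi)$ is not relatively free. The one genuinely delicate step is the third paragraph: the transformation $f\mapsto\hat f\circ\varphi|_A$ on $S^A$ readily manufactures involutions and tends to collapse orbits onto cyclic images, so the difficulty is to locate a truly periodic generating pair. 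The period\nobreakdash-$2$ pair above is essentially forced, $A_4$ being the smallest even-order group whose abelianisation has odd order (which, by the second paragraph, any even-order image of $G(\varphi)$ must have).
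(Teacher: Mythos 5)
Your proof is correct, and its overall strategy coincides with the paper's: produce a finite continuous homomorphic image of $G(\varphi)$ of even order, observe that the two-element group $C_2$ is not such an image, and derive the contradiction via Cauchy's theorem together with closure of pseudovarieties under subgroups and quotients. The differences lie in how the two ingredients are supplied, and your realization is genuinely lighter. For the $C_2$ ingredient, the paper merely cites the argument of \cite[Example~7.2]{Almeida:2005c} (that $G(\varphi)$ cannot be relatively free with respect to any pseudovariety containing $C_2$), whereas you reprove it from scratch via the presentation of Example~\ref{eg:1}, Proposition~\ref{p:values-omega-powers-subst}, and the observation that the incidence matrix of $\varphi$ modulo~$2$, namely $\left(\begin{smallmatrix}1&1\\1&1\end{smallmatrix}\right)$, squares to zero; this makes the whole proof self-contained. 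For the even-order image, the paper takes $A_5$ with $\theta(a)=(1\,2\,3)$, $\theta(b)=(3\,4\,5)$ and exponent $n=12$, delegating the verification to GAP or a long hand computation; you take $A_4$ with $f(a)=(1\,2\,3)$, $f(b)=(1\,2)(3\,4)$ and $n=2$, and the verification reduces to the identities $f(a)^3=f(b)^2=(f(a)f(b))^3=1$ (valid in either composition convention, since $f(a)f(b)$ is a $3$-cycle either way), together with the fact that these two permutations generate $A_4$ because $A_4$ has no subgroup of order~$6$. Thus your witness is checkable by hand in two lines, which is an improvement; what the paper's $A_5$ buys instead is a nonabelian simple image and uniformity with the parallel argument used for $G(\tau)$ in Theorem~\ref{t:not-relatively-free-profinite}. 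Two cosmetic points: your closing speculation that the period-$2$ pair is ``essentially forced'' is a dispensable aside, not part of the proof; and your restriction to finite $Y$ is harmless but not even needed, since once $C_2\in\pv H$ is established, $C_2$, being $1$-generated, is a continuous image of the free pro-\pv H group on any nonempty set.
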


\begin{proof}
  By Lemma~\ref{l:proper-image-maximal-subgroup}, the closed
  subsemigroup $H=\overline{\langle\varphi^\omega(a),\varphi^\omega(b)\rangle}$
  is a maximal subgroup isomorphic to~$G(\varphi)$. The argument
  in~\cite[Example~7.2]{Almeida:2005c} shows that $H$ cannot be
  relatively free with respect to any pseudovariety containing the
  two-element group. Hence, it suffices to show that the pseudovariety
  generated by the finite continuous homomorphic images of~$H$
  contains the two-element group, i.e., that $H$ has a continuous
  homomorphic image of finite even order. We claim, more specifically,
  that the alternating group $A_5$ is a continuous homomorphic image
  of~$H$.

  Let $A=\{a,b\}$ and let $h:\Om AS\to A_5$ be the unique
  continuous homomorphism such that $h(a)=(1\,2\,3)$ and
  $h(b)=(3\,4\,5)$. Note that $h$~is onto. To establish the
  claim, in view of Proposition~\ref{p:values-omega-powers-subst} it
  is enough to check that $h\circ\varphi^{12}|_A=h_A$.
  Although the length of the word $\varphi^n(a)$ depends exponentially
  on~$n$, the verification can be done easily by applying
  Lemma~\ref{l:technical} since
  $h\circ\varphi^{12}|_A=(\varphi_{A_5})^{12}(h|_A)$. The
  computation of $(\varphi_{A_5})^{12}(h|_A)$ can be carried
  out either by hand or by using a computer algebra system like
  GAP~\cite{GAP4:2006} and it confirms that indeed
  $(\varphi_{A_5})^{12}$ fixes $h|_A$, thereby proving the
  theorem.
\end{proof}

\subsection{The case of the Prouhet-Thue-Morse substitution}
\label{sec:finite-pres-prot}

Let $A$ be the two-letter alphabet $\{a,b\}$. The
\emph{Prouhet-Thue-Morse substitution} is the non-periodic primitive
substitution $\tau$ over $A$ given by $\tau(a)=ab$ and
$\tau(b)=ba$~\cite{Fogg:2002}. Note that no power of~$\tau$ is proper.
The word $aa$ is a connection for $\tau$ and $\tilde\tau=\tau^2$ is a
connective power of~$\tau$. The four elements of $X=X_\tau(a,a)$ are
$x=abba$, $y=ababba$, $z=abbaba$ and $t=ababbaba$, cf.~\cite[Section
3.2]{Balkova&Pelantova&Steiner:2008}. By Theorem~\ref{t:conjecture},
the \ci H-class $H=H_{aa}$ of $\tau^\omega(a)$, which is generated by
$\tot(X)$, admits the following presentation
\begin{equation}
  \label{eq:PTM-presentation-1}
  \langle X\mid \tilde\tau_{X,\pv G}^\omega(u)=u\ (u\in X)\rangle_\pv G.
\end{equation}
More precisely, the kernel of the continuous homomorphism $\Om XG\to
H$ that maps each $u\in X$ to $\tau^\omega(u)$ is the closed
congruence generated by the relations in the presentation
\eqref{eq:PTM-presentation-1}. Let $\alpha=\tot(x)$, $\beta=\tot(y)$,
$\gamma=\tot(z)$, and $\delta=\tot(t)$.

\begin{remark}\label{r:D-is-in-<A,B,C>}
  Let $\zeta$ be a continuous semigroup homomorphism from $\Om AS$
  into a profinite semigroup $S$. Suppose that $\zeta(x)$ belongs to a
  subgroup of~$S$. Then $\zeta(y)\cdot \zeta(x)^{\omega-1}\cdot
  \zeta(z)=\zeta(t)$.
\end{remark}

\begin{proof}
  We have
  $\zeta(ababba)\cdot \zeta(abba)^{\omega-1}\cdot \zeta(abbaba)
  =\zeta(ab)\cdot\zeta(abba)^{\omega+1}\cdot\zeta(ba)$,
  and $\zeta(abba)^{\omega+1}=\zeta(abba)$,
  because $\zeta(abba)$ is a group element of $S$.
\end{proof}

Applying Remark~\ref{r:D-is-in-<A,B,C>} to the continuous homomorphism
$\tot$, we conclude that $\beta\alpha^{-1}\gamma=\delta$ in $H$, so
that the profinite group $H$ is generated by $\{\alpha,\beta,\gamma\}$
and so the relation $yx^{-1}z=t$ turns out to be a consequence of the
relations in the presentation~\eqref{eq:PTM-presentation-1}.

A routine calculation shows that the images of letters of~$X$ by
$\tilde\tau_X$ are given by
$$
\tilde\tau_X(x)=zxy,\,
\tilde\tau_X(y)=ztxy,\,
\tilde\tau_X(z)=zxty,\,
\tilde\tau_X(t)=ztxty.$$
We proceed to give an alternative presentation of $G(\tau)$ as a
profinite group.

\begin{teor}\label{t:presentation-PTM}
  The group $G(\tau)$ admits the following presentation:
  \begin{equation}
    \label{eq:presentation-PTM}
    \langle x,y,z\mid 
    \Psi^\omega(x)=x,\,
    \Psi^\omega(y)=y,\,
    \Psi^\omega(z)=z
    \rangle_\pv G
  \end{equation}
  where $\Psi$ is the unique continuous endomorphism of
  \Om{\{x,y,z\}}G such that $\Psi(x)=zxy$, $\Psi(y)=zyx^{-1}zxy$, and
  $\Psi(z)=zxyx^{-1}zy$.
\end{teor}

\begin{proof}
  Let $X=\{x,y,z,t\}$ and $Y=X\setminus\{t\}$. Consider the continuous
  endomorphism $r$ of~\Om XG which fixes the elements of~$Y$ and maps
  $t$ to $yx^{-1}z$. Let $\Phi=\tilde\tau_{X,\pv G}$. Note that $\Psi$
  has been defined so that $r\circ\Phi$ coincides with $\Psi$ on $Y$.
  We extend $\Psi$
  to~\Om XG by putting $\Psi(t)=r(\Phi(t))$, which yields the equality
  $\Psi=r\circ\Phi$. On the other hand, we have
  $\Phi(t)=ztxty=ztxy\cdot (zxy)^{-1}\cdot zxty=\Phi(yx^{-1}z)$. As
  argued above, from Theorem~\ref{t:conjecture} it follows that
  $G(\tau)$ admits the presentation
  $$\langle X\mid yx^{-1}z=t,\, \Phi^\omega(u)=u\ (u\in X)\rangle_\pv G.$$
  To finish the proof, it now suffices to invoke
  Proposition~\ref{p:drop-relation}.
\end{proof}

For a profinite group $G$ and a pseudovariety of groups~\pv V, denote
by $G_\pv V$ the largest pro-\pv V factor group of~$G$.
For a prime $p$, let $\pv{Ab}_p$ denote the pseudovariety of all
elementary Abelian $p$-groups. The following result is well known
(cf.~\cite[Proposition~3.4.2 and Lemma~3.3.5]{Ribes&Zalesskii:2000}).

\begin{lema}
  \label{l:Abelianization-of relatively-free}
  Let \pv V and \pv W be pseudovarieties of groups such that $\pv
  V\subseteq\pv W$ and suppose that $G$ is a finitely generated free
  pro-\pv W group. Then $G_\pv V$ is a free pro-\pv V group and, if
  \pv V contains some nontrivial group, then the two groups have the
  same rank.
\end{lema}

In~\cite[Example~7.3]{Almeida:2005c} it was proved that the profinite
group $G(\tau)$ is not free on three generators: although the
computation starts from an incorrect set of return words, the same
argument goes through with the correct set. We may now adopt a
different approach to establish the following improvement.

\begin{teor}
  \label{t:not-relatively-free-profinite}
  The profinite group $G(\tau)$ is not relatively free.
\end{teor}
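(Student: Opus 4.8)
The plan is to assume that $G(\tau)$ is relatively free and to derive a contradiction by inspecting its largest elementary Abelian $p$-quotients, using only that the relations of Theorem~\ref{t:presentation} are satisfied. Write $F=\Om{\{\alpha,\beta,\gamma\}}G$ and let $\pi\colon F\to G(\tau)$ be the canonical surjection. Abelianizing, the endomorphism $\Psi$ induces the integer matrix
\[
  M=\begin{pmatrix} 1&0&0\\ 1&2&2\\ 1&2&2\end{pmatrix}
\]
on $\ZZ^3$, the columns being the exponent sums of $\Psi(\alpha)$, $\Psi(\beta)$, $\Psi(\gamma)$ in $\alpha,\beta,\gamma$. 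The crucial elementary observation is that $\det M=0$, so that $M$ is singular modulo every prime. For each prime $p$, the largest elementary Abelian $p$-quotient $G(\tau)_{\pv{Ab}_p}$ is a quotient of $\mathbb{F}_p^3/\Im(\bar M^\omega-I)$, where $\bar M$ denotes $M$ read over $\mathbb{F}_p$ and $\bar M^\omega$ its idempotent power in the finite monoid of $\mathbb{F}_p$-linear endomorphisms of $\mathbb{F}_p^3$. Indeed, the map $F\to G(\tau)_{\pv{Ab}_p}$ factors through $F_{\pv{Ab}_p}=\mathbb{F}_p^3$, and since the relators $\Psi^\omega(x)x^{-1}$ lie in $\Ker\pi$, their images in $\mathbb{F}_p^3$—which are exactly the columns of $\bar M^\omega-I$—must vanish in $G(\tau)_{\pv{Ab}_p}$. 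This step uses only that the relations hold, not that they generate $\Ker\pi$.

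Next I would bound the dimension. Since $\det M\equiv0\pmod p$, the matrix $\bar M$ is singular, so $\operatorname{rank}\bar M\le2$; as an idempotent power $\bar M^\omega=\bar M^k$ ($k\ge1$) satisfies $\Im\bar M^k\subseteq\Im\bar M$, we get $\operatorname{rank}\bar M^\omega\le2$, whence $\bar M^\omega\neq I$. Therefore $\bar M^\omega-I$ has rank at least $1$, and
\[
  \dim_{\mathbb{F}_p}\bigl(\mathbb{F}_p^3/\Im(\bar M^\omega-I)\bigr)=3-\operatorname{rank}(\bar M^\omega-I)\le 2.
\]
Consequently $\dim_{\mathbb{F}_p}G(\tau)_{\pv{Ab}_p}\le 2$ for every prime $p$.

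Finally, if $G(\tau)$ were relatively free, then by Lemma~\ref{l:Abelianization-of relatively-free} the group $G(\tau)_{\pv{Ab}_p}$ would be a free pro-$\pv{Ab}_p$ group of the same rank as $G(\tau)$, namely rank three by Theorem~\ref{t:rank}; that is, $G(\tau)_{\pv{Ab}_p}\cong(\ZZ/p\ZZ)^3$, of $\mathbb{F}_p$-dimension three. This contradicts the bound above, completing the proof.

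I expect the only genuinely delicate point to be the bookkeeping that makes the argument uniform in $p$ and independent of whether the given relations form a full presentation: all the content is concentrated in the single identity $\det M=0$, which forces $\bar M^\omega$ to be a proper idempotent over every residue field, so that the abelianization mod $p$ can never reach the three dimensions demanded by relative freeness.
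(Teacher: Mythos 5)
Your proof is correct, and strategically it coincides with the paper's: both arguments run through Lemma~\ref{l:Abelianization-of relatively-free}, playing off the size of a largest elementary Abelian $p$-quotient of $G(\tau)$ (computed from the relations $\Psi^\omega(x)=x$, which hold by Remark~\ref{r:2}) against a lower bound on the rank of $G(\tau)$ itself. The differences are tactical but worth spelling out. Where the paper computes inside the finite groups $K_p$ (deriving $\beta=\gamma$ and then iterating $\overline\Psi$ on triples), you abelianize once and for all and condense the computation into $\det M=0$; your matrix, and the identification of the relator images with the columns of $\bar M^\omega-I$, are correct, and the resulting bound is uniform in~$p$, which is elegant. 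The price is that $\dim_{\mathbb{F}_p}G(\tau)_{\pv{Ab}_p}\le 2$ is weaker than what the paper gets at $p=2$ (where $K_2$, hence $H_2$, is cyclic), so you are forced to use that $G(\tau)$ has rank three, i.e.\ Theorem~\ref{t:rank}. That theorem is proved via Proposition~\ref{p:values-omega-powers-subst} applied to the presented group of Theorem~\ref{t:presentation}, so it rests on the full presentation theorem. This creates no circularity (Theorem~\ref{t:rank} nowhere uses the present statement), so your argument is sound; but it contradicts your own framing that the argument uses ``only that the relations are satisfied'', and it sacrifices a feature the paper deliberately engineered: the paper's proof needs only rank $\ge 2$, which it obtains by an explicit $A_5$-quotient computation with Lemma~\ref{l:technical}, independently of Theorem~\ref{t:presentation}. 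The repair inside your own framework is immediate: modulo~$2$ your matrix $\bar M$ is already idempotent of rank one, so $\bar M^\omega-I$ has rank two and your bound improves to $\dim_{\mathbb{F}_2}G(\tau)_{\pv{Ab}_2}\le 1$; then rank $\ge 2$ suffices for the contradiction, and that lower bound can be had without Theorem~\ref{t:presentation} exactly as in the paper. With that adjustment your proof matches the paper's in strength and self-containment, while keeping the pleasant uniform formulation via $\det M=0$.
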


\begin{proof}
  We first note that, in view of Theorem~\ref{t:presentation-PTM}, the
  following presentation defines a finite quotient group $K_p$
  of~$G(\tau)$:
  \begin{align*}
    \langle x,y,z\mid\ 
    &
    \Psi^\omega(x)=x,\,
    \Psi^\omega(y)=y,\,
    \Psi^\omega(z)=z,\\
    &
    xy=yx,\,
    yz=zy,\,
    zx=xz,\,
    x^p=y^p=z^p=1
    \rangle_\pv G.
  \end{align*}
  Let $Y=\{x,y,z\}$. By Lemma~\ref{l:folklore}, the group $K_p$ also
  admits the presentation
  $$\langle x,y,z\mid
  \Lambda^\omega(x)=x,\,
  \Lambda^\omega(y)=y,\,
  \Lambda^\omega(z)=z
  \rangle_{\pv{Ab}_p},
  $$
  where $\Lambda$ is the continuous endomorphism of~$\Om Y{Ab}_p$
  induced by $\Psi$, which is given by $\Lambda(x)=zxy$ and
  $\Lambda(y)=\Lambda(z)=y^2z^2$. In the group $K_p$, we have
  $y=\Lambda^\omega(y)=\Lambda^\omega(z)=z$. Moreover, identifying each
  function $f$ from $Y$ to~$K_p$ with the triple $(f(x),f(y),f(z))$
  and applying iteratively the transformation $\Lambda_{K_p}\in\ci
  T(K_p^Y)$, one obtains inductively $\Lambda^n_{K_p}(x,y,y)
  =(xy^{2(4^n-1)/3},y^{4^n},y^{4^n})$. By Lemma~\ref{l:technical}, it
  follows that, in $K_p$ and for $n=m!$ sufficiently large, the
  equalities $xy^{2(4^n-1)/3}=x$ and $y^{4^n}=y$ hold. In particular,
  for $p=2$, we get $y=1$, which shows that $K_2$ is a cyclic group of
  order~2. On the other hand, for a prime $p>2$, the previous
  calculations show that $K_p$ is an elementary Abelian $p$-group of
  rank~two.
  
  Suppose that $G(\tau)$ were a free pro-\pv V group for some
  pseudovariety of groups~\pv V. By the above, \pv V contains
  $\pv{Ab}_p$ for every prime~$p$. Moreover, since
  $G(\tau)_{\pv{Ab}_p}=K_p$, the above calculations
  imply that $G(\tau)_{\pv{Ab}_p}$ has rank 1 for $p=2$ and rank 2 for
  an odd prime $p$, which contradicts Lemma~\ref{l:Abelianization-of
    relatively-free}. Hence, $G(\tau)$ cannot be a relatively free
  profinite group.
\end{proof}

The proof of Theorem~\ref{t:not-relatively-free-profinite} shows that
the rank of $G(\tau)$ is either two or three. The following result
settles the precise value of the rank. It is a further application of
the presentation of $G(\tau)$ given by Theorem \ref{t:presentation-PTM}.

\begin{teor}
  \label{t:rank}
  The group $G(\tau)$ has a group of order 18 of rank three as a
  continuous homomorphic image. Hence, $G(\tau)$ has rank three.
\end{teor}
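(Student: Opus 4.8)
The plan is to combine the explicit presentation $\langle\alpha,\beta,\gamma\mid\Psi^\omega(x)=x\rangle$ of $G(\tau)$ from Theorem~\ref{t:presentation} with Proposition~\ref{p:values-omega-powers-subst}, applied with $\pv V=\pv G$, $A=\{\alpha,\beta,\gamma\}$ and $\varphi=\Psi$ (so that $\varphi_{\pv G}=\Psi$ and the presented group is exactly $G(\tau)$). By that proposition, a finite group $S$ is a continuous homomorphic image of $G(\tau)$ as soon as there is a generating mapping $f\colon\{\alpha,\beta,\gamma\}\to S$ and an integer $n\ge1$ with $\overline\Psi^{\,n}(f)=f$, where $\overline\Psi\in\ci T(S^{\{\alpha,\beta,\gamma\}})$ is the transformation attached to $\Psi$ by Lemma~\ref{l:technical}. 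Since $\{\alpha,\beta,\gamma\}$ generates $G(\tau)$, its rank is at most three; hence it suffices to exhibit one rank-three continuous homomorphic image, which will force the rank to be exactly three (in particular strengthening the lower bound in Theorem~\ref{t:not-relatively-free-profinite}).

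For the target I would take the generalized dihedral group $\Gamma=V\rtimes\{\pm1\}$, where $V=(\ZZ/3\ZZ)^2$ and $\{\pm1\}$ acts on $V$ by scalar multiplication, so that $|\Gamma|=18$, the rotation subgroup $V$ is normal of index two, and every element outside $V$ is an involution. Writing elements as pairs $(v,\varepsilon)$ with product $(v,\varepsilon)(w,\eta)=(v+\varepsilon w,\varepsilon\eta)$, I would first verify that $\Gamma$ has rank three: any two elements meet $V$ in a cyclic subgroup, since the rotation part of the subgroup they generate is generated by a single rotation (one of the chosen elements, or the product of two chosen reflections) together with its inverse, hence cannot exhaust the rank-two group $V$; on the other hand a reflection together with two independent rotations do generate $\Gamma$.

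I would then define the generating mapping by $f(\alpha)=(0,-1)$, $f(\beta)=(e_1,1)$ and $f(\gamma)=(e_2,1)$, where $e_1,e_2$ is the standard basis of $V$; these generate $\Gamma$ by the previous paragraph. The crux is that $f$ lies in a short $\overline\Psi$-cycle. Using $\Psi(\alpha)=\gamma\alpha\beta$, $\Psi(\beta)=\gamma\beta\alpha^{-1}\gamma\alpha\beta$ and $\Psi(\gamma)=\gamma\alpha\beta\alpha^{-1}\gamma\beta$, one checks on the $\{\pm1\}$-component that the set of tuples whose $\alpha$-entry is a reflection and whose $\beta,\gamma$-entries are rotations is $\overline\Psi$-invariant and contains $f$. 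On this invariant set, writing the $\alpha$-entry as $(a,-1)$ and the $\beta,\gamma$-entries as $(b,1),(c,1)$, a direct multiplication in $\Gamma$ (using that reflections are involutions) gives $\overline\Psi\colon(a,b,c)\mapsto(a-b+c,\,-b,\,-c)$ on vector parts, a self-inverse transformation of $V^3$. Hence $\overline\Psi^{\,2}(f)=f$, so $\overline\Psi^\omega(f)=f$, and Proposition~\ref{p:values-omega-powers-subst} shows that $\Gamma$ is a continuous homomorphic image of $G(\tau)$; combined with the first paragraph this gives that $G(\tau)$ has rank three.

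The conceptual skeleton is routine once Proposition~\ref{p:values-omega-powers-subst} is available, so the genuine work lies in locating the order-$18$ group and its three generators; note that this rank must be invisible to abelian quotients, since the abelianization of $\Gamma$ is merely $\ZZ/2\ZZ$, which explains why the arguments of Theorem~\ref{t:not-relatively-free-profinite} stop at a lower bound of two. The one delicate point I expect is confirming the \emph{finite}-period identity $\overline\Psi^{\,2}(f)=f$ rather than only $\overline\Psi^\omega(f)=f$; organizing the computation through the stable regime above (reflection in the $\alpha$-slot, rotations in the $\beta,\gamma$-slots, preserved by $\overline\Psi$) reduces it to the manifestly involutive map on $V^3$ and avoids expanding any long word.
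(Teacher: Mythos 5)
Your proposal is correct and takes essentially the same route as the paper: the paper's group $H=\langle a,b,c\mid a^2=b^3=c^3=1,\ bc=cb,\ aba=b^2,\ aca=c^2\rangle$ is precisely your generalized dihedral group $(\ZZ/3\ZZ)^2\rtimes\{\pm1\}$, the paper's assignment $\alpha\mapsto a$, $\beta\mapsto b$, $\gamma\mapsto c$ matches your $f$, and the paper likewise verifies $\theta(\Psi^2(x))=\theta(x)$ and concludes via Proposition~\ref{p:values-omega-powers-subst}. The only differences are cosmetic --- your coordinates $(v,\varepsilon)$ turn the verification into the manifestly self-inverse map $(a,b,c)\mapsto(a-b+c,-b,-c)$, which is tidier than the paper's in-presentation computation --- plus one harmless slip: when both chosen elements are rotations their ``intersection with $V$'' need not be cyclic, but they then lie in the proper subgroup $V$, so the rank-three conclusion is unaffected.
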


\begin{proof}
  Set $Y=\{x,y,z\}$. Let $K$ be the group given by the
  following presentation
  $$\langle a,b,c\mid
  a^2=b^3=c^3=1,\, bc=cb,\, aba=b^2,\, aca=c^2\rangle_\pv G.
  $$
  Note that $K$ is the semidirect product of the subgroup $\langle
  b,c\rangle$, which is the direct product of two three-element
  groups, by the two-element subgroup~$\langle a\rangle$. Let
  $h:\Om YG\to K$ be the continuous homomorphism that sends
  $x,y,z$ respectively to~$a,b,c$.

  We first verify that $h(\Psi^2(u))=h(u)$ for all $u\in Y$.
  Since the calculations are quite similar, we treat only the case
  where $u=y$, leaving the other two cases for the reader to
  check:
  \begin{align*}
    h(\Psi^2(y))
    &=caba^{-1}cb\cdot cba^{-1}cab\cdot (cab)^{-1}\cdot caba^{-1}cb\cdot
    cab\cdot cba^{-1}cab\\
    &=c\cdot aba\cdot cbcbac\cdot aba\cdot cbcabcb\cdot aca\cdot b\\
    &=c\cdot b^2\cdot cbcbac\cdot b^2\cdot cbcabcb\cdot c^2\cdot b
    =b
    =h(y).
  \end{align*}
  From Proposition~\ref{p:values-omega-powers-subst}, it follows that
  $K$ is a continuous homomorphic image of~$G(\tau)$. Since it is
  easily checked that $K$ has rank three, it follows that so
  does~$G(\tau)$.
\end{proof}

One can also use the same technique as in the proof of
Theorem~\ref{t:eg:1} to establish that other finite groups are
continuous homomorphic images of~$G(\tau)$, as in the following
observation.

\begin{remark}
  \label{r:3}
  The alternating group $A_5$ is a continuous homomorphic image
  of~$G(\tau)$.
\end{remark}

\begin{proof}
  Let $A=\{a,b\}$ and consider the transformation $\tau_{A_5}\in
  A_5^A$ associated with the substitution $\tau$ according to
  Lemma~\ref{l:technical}. Identifying here $f\in A_5^A$ with the pair
  $(f(a),f(b))$, we have $\tau_{A_5}(x,y)=(xy,yx)$. Again, a
  straightforward calculation shows that $\tau^6_{A_5}$ fixes the
  pair of 3-cycles $((1\,2\,3),(3\,4\,5))$. Hence, for the continuous
  homomorphism $h:\Om AS\to A_5$ given by $h(a)=(1\,2\,3)$ and
  $h(b)=(3\,4\,5)$, by Lemma~\ref{l:technical} we obtain the
  equalities $h(\tau^\omega(a))=(1\,2\,3)$ and
  $h(\tau^\omega(a))=(3\,4\,5)$, from which it follows that
  $h(\tau^\omega(abba))=(1\,3\,2\,5\,4)$ and
  $h(\tau^\omega(ababba))=(1\,5\,2)$. This proves the claim since
  $A_5$~is generated by the latter two cycles, while
  $\tau^\omega(abba)$ and $\tau^\omega(ababba)$ belong to $G(\tau)$.
\end{proof}

We do not know whether the finite quotients of~$G(\tau)$ generate a
proper pseudovariety of groups. On the other hand, every finite cyclic
group is a continuous homomorphic image of~$G(\tau)$. Indeed, adding
to the defining relations of the presentation of~$G(\tau)$ given by
Theorem~\ref{t:presentation-PTM} the relations $y=z=1$, we obtain the
free procyclic group.

The following result adds further information about the presentation
of Theorem~\ref{t:presentation-PTM}.
  
\begin{prop}
  \label{p:non-trivial-relations-in-T}
  For each $u\in {\{x,y,z\}}$, the pseudoidentity
  $\Psi^\omega(u)=u$ fails in the two-element group $C_2$. Hence, for
  each $u\in {\{x,y,z\}}$, the relation
  $\Psi^\omega(u)=u$, which holds in~$G(\tau)$, is nontrivial.
\end{prop}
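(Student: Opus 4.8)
The plan is to exhibit the failure of each pseudoidentity $\Psi^\omega(x)=x$ in $C_2$ by passing to the abelianization modulo~$2$. Since $C_2$ is abelian of exponent~$2$, every continuous homomorphism $\Om{\{\alpha,\beta,\gamma\}}G\to C_2$ factors through the maximal such quotient, the finite group $(\ZZ/2\ZZ)^3$; write $\rho$ for the canonical projection onto it. The endomorphism $\Psi$ induces a linear endomorphism $\bar\Psi$ of $(\ZZ/2\ZZ)^3$ with $\rho\circ\Psi=\bar\Psi\circ\rho$, and by continuity of the $\omega$-power together with the finiteness of $(\ZZ/2\ZZ)^3$ one obtains $\rho\circ\Psi^\omega=\bar\Psi^\omega\circ\rho$, where $\bar\Psi^\omega$ is simply the idempotent power of $\bar\Psi$ in the finite monoid of endomorphisms of $(\ZZ/2\ZZ)^3$. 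Thus the whole problem reduces to a finite computation of linear algebra over $\ZZ/2\ZZ$.

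First I would read off $\bar\Psi$ from the defining words of $\Psi$ by taking exponent sums modulo~$2$. In the ordered basis $(\alpha,\beta,\gamma)$ this gives $\bar\Psi(\alpha)=\alpha+\beta+\gamma$ (all three exponent sums in $\gamma\alpha\beta$ being odd), while $\bar\Psi(\beta)=0$ and $\bar\Psi(\gamma)=0$ (every exponent sum in $\gamma\beta\alpha^{-1}\gamma\alpha\beta$ and in $\gamma\alpha\beta\alpha^{-1}\gamma\beta$ being even). The matrix $N$ of $\bar\Psi$ thus has first column $(1,1,1)^{\mathrm t}$ and vanishing second and third columns, and a direct check shows $N^2=N$; hence $\bar\Psi^\omega=\bar\Psi$. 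It then suffices to observe that $\bar\Psi$ fixes none of the three basis vectors: $\bar\Psi(\alpha)=\alpha+\beta+\gamma\neq\alpha$, $\bar\Psi(\beta)=0\neq\beta$ and $\bar\Psi(\gamma)=0\neq\gamma$ in $(\ZZ/2\ZZ)^3$. For each generator $x$ the images $\rho(\Psi^\omega(x))=\bar\Psi^\omega(\rho(x))=\bar\Psi(\rho(x))$ and $\rho(x)$ therefore differ, and a coordinate projection $(\ZZ/2\ZZ)^3\to C_2$ separating them yields a continuous homomorphism onto $C_2$ in which $\Psi^\omega(x)=x$ fails.

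For the concluding assertion I would argue that the relations hold in $G(\tau)$ by Remark~\ref{r:2}, since $\psi$ is an automorphism of $G(\tau)$ and $\Psi$ is a lifting of it; were any of these relations trivial, meaning $\Psi^\omega(x)=x$ already as elements of the free profinite group $\Om{\{\alpha,\beta,\gamma\}}G$, then the equality would persist in every continuous homomorphic image, in particular in $C_2$, contradicting the first part. Hence $\Psi^\omega(x)\neq x$ in the free group and each relation is nontrivial. I do not expect a genuine obstacle here: the heart of the argument is a one-line linear computation over $\ZZ/2\ZZ$, and the only point demanding care is the justification that $\rho\circ\Psi^\omega=\bar\Psi^\omega\circ\rho$ with $\bar\Psi^\omega=\bar\Psi$, which rests on the continuity of the $\omega$-power and on the fact that the idempotent power of an endomorphism of a finite group is reached within the cyclic semigroup it generates.
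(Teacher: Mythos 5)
Your proof is correct, and it reaches the paper's conclusion by a dual route rather than the same one. The paper invokes Lemma~\ref{l:technical}: it computes the induced transformation $\overline{\Psi}$ on the set $C_2^X$ of valuations of the generators, finds $\overline{\Psi}(h_1,h_2,h_3)=(h_3h_1h_2,1,1)$, notes that this transformation is itself idempotent (so $\overline{\Psi^\omega}=\overline{\Psi}^\omega=\overline{\Psi}$), and then exhibits the explicit witnesses $(a,a,1)$ and $(a,1,a)$, both sent to $(1,1,1)$, which refute all three pseudoidentities at once. You instead push $\Psi$ down to the maximal elementary abelian $2$-quotient $(\ZZ/2\ZZ)^3$ and do the linear algebra there: your matrix $N$ (first column $(1,1,1)^{\mathrm t}$, other columns zero) is exactly the transpose of the paper's action on valuations, your check $N^2=N$ is the same idempotence computation, and your coordinate-projection step is the duality that converts ``no basis vector is fixed'' back into explicit failing valuations in $C_2$. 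So the two arguments are two packagings of one mod-$2$ exponent-sum computation. The paper's route buys economy: Lemma~\ref{l:technical} is already on hand (it also drives Theorems~\ref{t:eg:1}, \ref{t:not-relatively-free-profinite} and~\ref{t:rank}), and it settles in one stroke both the bookkeeping $\overline{\Psi^\omega}=\overline{\Psi}^\omega$ and the production of concrete witnesses. Your route buys transparency: it isolates the structural reason the relations are already visible in $C_2$, namely that the abelianized substitution is a rank-one idempotent killing $\beta$ and $\gamma$; the price is that you must separately justify that $\Ker\rho$ is invariant under $\Psi$ and that $\rho\circ\Psi^\omega=\bar\Psi^\omega\circ\rho$, which you do correctly via pointwise convergence of $\Psi^{n!}$ and finiteness of $\End{(\ZZ/2\ZZ)^3}$ --- precisely the content that Lemma~\ref{l:technical} packages once and for all. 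Your treatment of the final assertion (validity in $G(\tau)$ via Remark~\ref{r:2}, nontriviality in $\Om {\{\alpha,\beta,\gamma\}}G$ because a free-group identity would persist in every continuous quotient, in particular in $C_2$) matches the paper's intent.
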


\begin{proof}
  Let $a$ be the nonidentity element of $C_2$ and let $Y=\{x,y,z\}$.
  Then, in the notation of Lemma~\ref{l:technical}, one verifies that
  the transformation $\Psi_{C_2}$ is idempotent. Moreover, if we
  identify each function $f\in C_2^Y$ with the triple
  $(f(x),f(y),f(z))$, then
  $\Psi_{C_2}(f_1,f_2,f_3)=(f_3f_1f_2,1,1)$. In particular, we
  obtain
  $\Psi^\omega_{C_2}(a,a,1)=\Psi^\omega_{C_2}(a,1,a)=(1,1,1)$.
  Hence, none of the pseudoidentities~$\Psi^\omega(u)=u$, with $u\in
  Y$, is satisfied by~$C_2$.
\end{proof}

\section{Open problems}
\label{sec:problems}

We end with a few open problems.

\begin{prob}
  \label{prob:0}
  Let $\varphi$ be a primitive substitution over a finite alphabet.
  And let $\pv V(\varphi)$ be the pseudovariety generated by the
  (continuous) homomorphic images of~$G(\varphi)$.
  \begin{enumerate}
  \item When is $\pv V(\varphi)=\pv G$?
  \item ``Compute'' $\pv V(\varphi)$.
  \end{enumerate}
\end{prob}

\begin{prob}
  \label{prob:1}
  \begin{enumerate}
  \item For which (minimal) subshifts $\ci X$, is the
    associated Sch\"utzenberger group $G(\ci X)$ decidable?
  \item In particular, is there any such group which is undecidable?
  \end{enumerate}
\end{prob}

It is well known that a free profinite group relatively to an
extension-closed pseudovariety is \pv G-projective as a profinite group
(cf.\ \cite{Gruenberg:1967} and~\cite[Corollary~11.2.3]{Wilson:1998})
and so, in view of the results of \cite{Almeida:2005c} or
\cite{Rhodes&Steinberg:2008}, finitely generated such groups certainly
appear as closed subgroups of free profinite semigroups on two
generators. P.~Zalesski\u{\i} asked in the \emph{Fields Workshop on
  Profinite Groups and Applications} (Carleton University, August
2005) and also in the Meeting of the \emph{ESI Programme on Profinite
  Groups} (Vienna, December 2008) whether in particular free pro-$p$
groups can appear as Sch\"utzenberger groups of free profinite
semigroups. In our setting, and in view of the results of this
section, this suggests the following question.

\begin{prob}
  \label{prob:2}
  Let $\Phi$ be a continuous endomorphism of\/~\Om XG and let $G$ be
  the profinite group presented by $\langle X\mid \Phi^\omega(x)=x\
  (x\in X)\rangle$.
  Under what assumptions on~$\Phi$ is $G$ a relatively free profinite
  group?
\end{prob}

As has been pointed out by the referee, if \pv H is a pseudovariety of
groups that contains every finite group whose Frattini quotient
belongs to~\pv H, then \Om XH admits such a presentation. Indeed every
free pro-\pv H group is \pv G-projective
\cite[Proposition~7.6.7]{Ribes&Zalesskii:2000}, and every finitely
generated \pv G-projective profinite group has such a presentation by
Corollary~\ref{c:there-is-some-presentation-of-that-kind}.

\subsubsection*{Acknowledgment}
We thank the anonymous referee for several suggestions, corrections,
and comments that contributed to a significant improvement in the
presentation of the paper.

\bibliographystyle{amsplain}

\begin{thebibliography}{10}

\bibitem{Almeida:1994a}
J.~Almeida, \emph{Finite semigroups and universal algebra}, World Scientific,
  Singapore, 1995, {E}nglish translation.

\bibitem{Almeida:1999c}
\bysame, \emph{Dynamics of implicit operations and tameness of pseudovarieties
  of groups}, Trans. Amer. Math. Soc. \textbf{354} (2002), 387--411.

\bibitem{Almeida:2005c}
\bysame, \emph{Profinite groups associated with weakly primitive
  substitutions}, Fundamentalnaya i Prikladnaya Matematika (Fundamental and
  Applied Mathematics) \textbf{11} (2005), no.~3, 13--48, In Russian. English
  version in J. Math. Sciences \textbf{144}, No. 2 (2007) 3881--3903.

\bibitem{Almeida:2003c}
\bysame, \emph{Profinite semigroups and applications}, Structural Theory of
  Automata, Semigroups, and Universal Algebra (New York) (Valery~B. Kudryavtsev
  and Ivo~G. Rosenberg, eds.), NATO Science Series II: Mathematics, Physics and
  Chemistry, vol. 207, Springer, 2005, Proceedings of the NATO Advanced Study
  Institute on Structural Theory of Automata, Semigroups and Universal Algebra,
  Montréal, Québec, Canada, 7-18 July 2003, pp.~1--45.

\bibitem{Almeida&ACosta:2007a}
J.~Almeida and A.~Costa, \emph{Infinite-vertex free profinite semigroupoids and
  symbolic dynamics}, J. Pure Appl. Algebra \textbf{213} (2009), 605--631.

\bibitem{Almeida&Volkov:2006}
J.~Almeida and M.~V. Volkov, \emph{Subword complexity of profinite words and
  subgroups of free profinite semigroups}, Int. J. Algebra Comput. \textbf{16}
  (2006), 221--258.

\bibitem{Balkova&Pelantova&Steiner:2008}
L.~Balkov{\'a}, E.~Pelantov{\'a}, and W.~Steiner, \emph{Sequences with constant
  number of return words}, Monatsh. Math. \textbf{155} (2008), no.~3-4,
  251--263.

\bibitem{ACosta:2006}
A.~Costa, \emph{Conjugacy invariants of subshifts: an approach from profinite
  semigroup theory}, Int. J. Algebra Comput. \textbf{16} (2006), 629--655.

\bibitem{ACosta&Steinberg:2011}
A.~Costa and B.~Steinberg, \emph{Profinite groups associated to sofic shifts
  are free.}, Proc. London Math. Soc. \textbf{102} (2011), 341--369.

\bibitem{Durand:1998}
F.~Durand, \emph{A characterization of substitutive sequences using return
  words}, Discrete Math. \textbf{179} (1998), no.~1-3, 89--101.

\bibitem{Durand:2000}
\bysame, \emph{Linearly recurrent subshifts have a finite number of
  non-periodic subshift factors}, Ergodic Theory Dynam. Systems \textbf{20}
  (2000), no.~4, 1061--1078.

\bibitem{Durand&Host&Skau:1999}
F.~Durand, B.~Host, and C.~Skau, \emph{Substitutional dynamical systems,
  {B}ratteli diagrams and dimension groups}, Ergodic Theory Dynam. Systems
  \textbf{19} (1999), no.~4, 953--993.

\bibitem{Fogg:2002}
N.~Pytheas Fogg, \emph{Substitutions in dynamics, arithmetics and
  combinatorics}, Lecture Notes in Mathematics, vol. 1794, Springer-Verlag,
  Berlin, 2002.

\bibitem{GAP4:2006}
The GAP~Group, \emph{{GAP -- Groups, Algorithms, and Programming, Version
  4.4}}, 2006, \verb+(http://www.gap-system.org)+.

\bibitem{Gruenberg:1967}
K.~W. Gruenberg, \emph{Projective profinite groups}, J. London Math. Soc.
  \textbf{42} (1967), 155--165.

\bibitem{Harju&Linna:1986}
T.~Harju and M.~Linna, \emph{On the periodicity of morphisms on free monoids},
  RAIRO Inf. Th\'eor. et Appl. \textbf{20} (1986), 47--54.

\bibitem{Hunter:1983}
R.~P. Hunter, \emph{Some remarks on subgroups defined by the {B}ohr
  compactification}, Semigroup Forum \textbf{26} (1983), 125--137.

\bibitem{Lind&Marcus:1996}
D.~Lind and B.~Marcus, \emph{An introduction to symbolic dynamics and coding},
  Cambridge University Press, Cambridge, 1996.

\bibitem{Lothaire:2001}
M.~Lothaire, \emph{Algebraic combinatorics on words}, Cambridge University
  Press, Cambridge, UK, 2002.

\bibitem{Lubotzky:2001}
A.~Lubotzky, \emph{Pro-finite presentations}, J. Algebra \textbf{242} (2001),
  672--690.

\bibitem{Margolis&Sapir&Weil:1995}
S.~Margolis, M.~Sapir, and P.~Weil, \emph{Irreducibility of certain
  pseudovarieties}, Comm. Algebra \textbf{26} (1998), 779--792.

\bibitem{Mosse:1992}
B.~Moss{\'e}, \emph{Puissances de mots et reconnaissabilit\'e des points fixes
  d'une substitution}, Theor. Comp. Sci. \textbf{99} (1992), no.~2, 327--334.

\bibitem{Mosse:1996}
\bysame, \emph{Reconnaissabilit\'e des substitutions et complexit\'e des suites
  automatiques}, Bull. Soc. Math. France \textbf{124} (1996), no.~2, 329--346.

\bibitem{Pansiot:1986}
J.-J. Pansiot, \emph{Decidability of periodicity for infinite words}, RAIRO
  Inf. Th\'eor. et Appl. \textbf{20} (1986), 43--46.

\bibitem{Rhodes&Steinberg:2008}
J.~Rhodes and B.~Steinberg, \emph{Closed subgroups of free profinite monoids
  are projective profinite groups}, Bull. London Math. Soc. \textbf{40} (2008),
  no.~3, 375--383.

\bibitem{Rhodes&Steinberg:2009qt}
\bysame, \emph{The $q$-theory of finite semigroups}, Springer Monographs in
  Mathematics, Springer, 2009.

\bibitem{Ribes&Zalesskii:2000}
L.~Ribes and P.~A. Zalesski{\u\i}, \emph{Profinite groups}, Ergeb. Math.
  Grenzgebiete 3, no.~40, Springer, Berlin, 2000.

\bibitem{Steinberg:2009}
B.~Steinberg, \emph{Maximal subgroups of the minimal ideal of a free profinite
  monoid are free}, Israel J. Math. \textbf{176} (2010), 139--155.

\bibitem{Wilson:1998}
J.~Wilson, \emph{Profinite groups}, London Mathematical Society Monographs, New
  Series, vol.~19, Clarendon, Oxford, 1998.

\end{thebibliography}

\end{document}